\documentclass[11pt]{article}

\usepackage{amsmath,amssymb,amsthm,mathtools}
\usepackage{geometry}
\usepackage{graphicx}
\graphicspath{{figures/}{./}}   
\usepackage{booktabs}
\usepackage{xcolor}
\usepackage[colorlinks=true,linkcolor=blue!65!black,citecolor=green!45!black,urlcolor=blue!65!black,
  pdftitle={A finite Guinand-Weil dictionary and archimedean tail order for the truncated Weil quadratic form},
  pdfauthor={Akiva Groskin},
  pdfsubject={Riemann zeta function; Weil quadratic form; Connes-van Suijlekom truncation},
  pdfkeywords={Riemann zeta function, Weil quadratic form, Guinand-Weil formula, Connes-van Suijlekom, Galerkin truncation, total positivity}]{hyperref}
\usepackage{url}
\theoremstyle{plain}
\newtheorem{theorem}{Theorem}[section]

\newtheorem{lemma}[theorem]{Lemma}
\newtheorem{corollary}[theorem]{Corollary}

\theoremstyle{definition}

\theoremstyle{remark}
\newtheorem{remark}[theorem]{Remark}

\newcommand{\R}{\mathbb R}
\newcommand{\C}{\mathbb C}

\newcommand{\cvs}{Connes--van Suijlekom}
\newcommand{\ccm}{Connes--Consani--Moscovici}
\newcommand{\wh}{\widehat}
\newcommand{\supp}{\operatorname{supp}}
\newcommand{\dd}{\,d}
\newcommand{\Qinf}{Q_{\infty}}
\newcommand{\Wp}{W_p}
\newcommand{\WR}{W_{\R}}
\newcommand{\Wzt}{W_{0,2}}

\title{A finite Guinand--Weil dictionary and archimedean tail order\\
for the truncated Weil quadratic form}

\author{Akiva Groskin}

\date{July 2026}

\begin{document}

\maketitle

\begin{abstract}
The \cvs\ and \ccm\ truncations of the Weil quadratic form, at a prime cutoff
$c>1$ and frequency band $N$, produce finite Galerkin matrices whose spectra
are the finite-rank window on Weil positivity.
We prove two exact finite theorems about this truncation.  First, every real
even Galerkin coefficient vector $v$ determines, in closed form, a band-limited
Guinand--Weil test function $g_v$ with
\[
        \langle v,\Qinf v\rangle
        =
        \sum_{\zeta(1/2+iz)=0}^{*} g_v(z),
\]
zeros counted with multiplicity, $\Qinf$ the cutoff-free truncated matrix:
every value of the truncated form is an exact sum over the nontrivial zeros
of $\zeta$.  The construction factors through an
exact source quotient of dimension $2N+1$ and admits a non-collapsing
pole-neutral subfamily.  Second, beyond the Galerkin band the omitted
archimedean tail is a totally positive Cauchy--Stieltjes increment.  This
yields a two-sided certification rule with an explicit budget
$B_T\sim(2N{+}1)\rho\,\log T/(\pi^2T)$, where $T$ is the archimedean cutoff
and $\rho=2\pi/\log c$: finite-cutoff positivity certifies
cutoff-free positivity, a finite-cutoff eigenvalue below $-B_T$ certifies a
cutoff-free negative, and a negative eigenvalue in the band $[-B_T,0)$
certifies nothing.  Resolving a spectral scale of $10^{-59}$ at $c=100$ by brute
cutoff would require $T$ of order $10^{63}$; a cutoff-free interval
$LDL^{\mathsf T}$ factorization resolves it directly.  The dictionary is
verified over the first $512$ zeros of $\zeta$ and by three independent
computational routes; all scripts and artifacts ship with the paper.
\end{abstract}

\section{Introduction}

The Riemann--Weil explicit formula relates the nontrivial zeros of $\zeta(s)$
to prime, pole, and archimedean terms, and Weil positivity, the positivity of
the associated quadratic form on a suitable class of test functions, is
equivalent to the Riemann hypothesis \cite{Weil1952,Guinand1948,Bombieri2000,
Yoshida1992}.  In Connes's program the Weil form is the central object of a
spectral attack on the zeros
\cite{Connes1999,ConnesConsani2020,ConnesConsani2021,Connes2026}, and recent
work of
\cvs\ \cite{ConnesvanSuijlekom2025} and \ccm\ \cite{ConnesConsaniMoscovici2025}
turns the form into concrete finite objects: for a prime cutoff $c>1$ and a
band $N$, an explicit $(2N{+}1)\times(2N{+}1)$ Galerkin matrix whose deep
spectrum is the finite-rank window on Weil positivity.  Suzuki's screw-function
framework \cite{Suzuki2026} gives a parallel continuous-variable treatment of
the same form.

This note isolates the part of the truncated structure that is exact at every
finite level, and proves a finite-cutoff order theorem that governs what
finite numerics can and cannot certify about it.  The two results are
independent theorems about the same object, and together they turn the
truncated matrix into a calibrated instrument: its values are exact zero
sums, and its finite-cutoff spectra carry explicit certification bounds.

\emph{First result.}  A real even coefficient vector $v\in\R^{N+1}$
canonically determines a chain
\[
        v\longmapsto T_v\longmapsto K_v\longmapsto \wh g_v\longmapsto g_v,
\]
ending in an entire, band-limited Guinand--Weil test function $g_v$, and for
this induced test function the cutoff-free truncated form evaluates the zero
side of the explicit formula exactly (Theorem~\ref{thm:dictionary}):
\emph{every entry of the \cvs/\ccm\ matrix, and every quadratic value
$\langle v,\Qinf v\rangle$, is an exact finite-form sum over the nontrivial
zeros of $\zeta$.}  The same calculation shows that all finite signed source
distributions act through an exact quotient of dimension $2N+1$
(Corollary~\ref{cor:source-quotient}), and that the pole term can be switched
off on a non-collapsing subfamily of dimension $N-s-1$
(Corollary~\ref{cor:pole-neutral}), which is the finite analogue of working
orthogonally to the pole in Weil's criterion.  No limit in $N$ and no
archimedean cutoff enters the dictionary: the coefficient vector is
transported to one compactly supported test function before any numerical
quadrature exists.

\emph{Second result.}  For the archimedean term, finite computations must
truncate an integral at some cutoff $T$.  Theorem~\ref{thm:tail-order} proves
that past the Galerkin band the omitted tail is a rank-two-density
Cauchy--Stieltjes Gram increment: for $T_2>T_1>\max(\rho N,7)$ the matrix
increment $Q_{{\rm arch},T_2}-Q_{{\rm arch},T_1}$ is positive definite, and in
the natural frequency order it is strictly totally positive.  The consequence
(Corollary~\ref{cor:tail-cert}) is a two-sided decision rule with an explicit,
elementary budget $B_T$:
\[
   \lambda_j(Q_T^{\rm tot})\;<\;\lambda_j(\Qinf)\;\le\;\lambda_j(Q_T^{\rm tot})+B_T,
   \qquad
   B_T=\frac{(2N{+}1)\rho}{\pi^2T}\Bigl(\log\frac{T}{2\pi}+1\Bigr)(1+o(1)).
\]
Finite-$T$ positivity therefore certifies cutoff-free positivity, a
finite-$T$ eigenvalue below $-B_T$ certifies a cutoff-free negative, and a
negative eigenvalue inside the band $[-B_T,0)$ certifies nothing.  Since
$B_T$ decays only like $\log T/T$, each doubling of $T$ buys about one binary
digit: deep spectral scales are unreachable by brute cutoff, and the correct
finite instrument at depth is the cutoff-free assembly itself.

The second theorem is motivated by a correction to an earlier finite-cutoff
computation of the author \cite{Groskin2026Zeros}: at $(c,N,T)=(100,200,800)$
a fixed archimedean cutoff produced deep negative eigenvalues that are
controlled by the omitted tail, not by the cutoff-free form.  Andrews's independent reproduction and
Silva's quadrature-sensitivity and exact-entry analyses made the cutoff-free
treatment explicit \cite{Andrews20427500,Silva20650146,Silva20671635}; the
tail-order theorem is the finite algebraic rule behind that phenomenology,
and Section~\ref{sec:tail} quantifies it: certifying the sign of a
$10^{-59}$-scale eigenvalue at $c=100$ through the cutoff would require
$T\approx8\cdot10^{62}$, while a cutoff-free interval $LDL^{\mathsf T}$
factorization of the same matrix certifies $n_-=0$ directly.

\emph{Relation to prior work.}  The zero-sum identity of
Theorem~\ref{thm:dictionary} is the Weil explicit formula applied to the
induced test function; the autocorrelation mechanism $g=f*\tilde f$ goes back
to Weil \cite{Weil1952}, finite-dimensional restrictions of the Weil form were
studied by Yoshida \cite{Yoshida1992}, and the divided-difference structure of
the truncated matrix is Proposition~4.1 of \cvs\
\cite{ConnesvanSuijlekom2025}.  The contribution here is the exact closed-form
transport: the explicit chain $v\mapsto g_v$ with its inverse-free calculus
(Lemma~\ref{lem:source-calculus}), the identification of the assembled matrix
with the \ccm\ closed forms at entry level (Lemma~\ref{lem:entry-id}), the
exact $2N{+}1$ source quotient, the pole-neutral family, and the tail-order
theorem with its strict total positivity and explicit budget.  Band-limited
test functions are a standard device in explicit-formula analysis (see for
instance \cite{ChirreMolero2026} and the literature cited there); what is used
here is the exact match between that class and the finite \cvs/\ccm\ data.
The total-positivity input is classical Cauchy-kernel theory
\cite{Karlin1968,Simon2014CauchyTP,BertolaGekhtmanSzmigielski2009}, and Silva
records adjacent Loewner, Herglotz, and operator-monotone structure for the
prime and pole pieces
\cite{Silva20710075,Silva20737111,Silva20682834,Silva20694588}.

All numerical statements in this paper are backed by a released verification
package (Section~\ref{sec:verification}) containing exact symbolic audits,
interval-arithmetic certificates computed with Arb \cite{Johansson2017Arb},
and a three-route confirmation of the dictionary identity, including a sum
over the first $512$ zeros of $\zeta$.

\section{The finite dictionary}\label{sec:dictionary}

\subsection{From coefficient vectors to test functions}

Fix $c>1$, put
\[
        L=\log c,\qquad \Delta=\frac{L}{2\pi},\qquad \rho=\frac{2\pi}{L},
\]
and fix $N\ge0$.  A real even-sector Galerkin vector
$v=(v_0,\ldots,v_N)$ is embedded into symmetric full coefficients by
\[
        u_0=v_0,\qquad u_k=u_{-k}=\frac{v_k}{\sqrt2}\quad(1\le k\le N),
\]
which is the isometric even-sector embedding used by \cvs\ and \ccm.  Let
\[
        T_v(t)=\sum_{m=-N}^{N}u_m e^{2\pi i mt}
\]
and define the Volterra sine-chord kernel
\[
        K_v(\omega)=2\int_0^\omega T_v(t)\,T_v(\omega-t)\dd t.
\]
The induced Fourier weight and test function are
\[
        \wh g_v(\xi)=
        \begin{cases}
        \pi K_v(1-|\xi|/\Delta),& |\xi|\le \Delta,\\
        0,& |\xi|>\Delta,
        \end{cases}
\qquad
        g_v(z)=\int_{-\Delta}^{\Delta}\wh g_v(\xi)e^{2\pi iz\xi}\dd\xi .
\]
The weight is even, hence $g_v$ is even.

For a $C^1$ source function $\psi$ on $\R$, let $Q_\psi$ denote the finite
divided-difference matrix on indices $I_N=\{-N,\ldots,N\}$:
\[
 (Q_\psi)_{mn}=
 \begin{cases}
   \dfrac{\psi(m)-\psi(n)}{m-n},& m\ne n,\\[1.2ex]
   \psi'(m),& m=n,
 \end{cases}
\]
exactly the structure of \cite[Prop.~4.1]{ConnesvanSuijlekom2025}, and write
\[
        \langle v,Q_\psi v\rangle
        =\sum_{m,n=-N}^{N}u_m u_n (Q_\psi)_{mn}
\]
for the even-sector contraction.  The three source functions of the truncated
Weil form are
\begin{align}
\psi_p^{(c)}(x)&=
-\frac1\pi\sum_{q=p^a\le c}\frac{\Lambda(q)}{\sqrt q}
 \sin\!\left(2\pi x\left(1-\frac{\log q}{L}\right)\right),
 \label{eq:prime-source}\\
\psi_0(x)&=
\frac1\pi\int_0^L
2\cosh(y/2)\sin\!\left(2\pi x\left(1-\frac yL\right)\right)\dd y,
 \label{eq:pole-source}\\
\psi_{\R,T}(x)&=\frac{1}{2\pi^2}\int_{-T}^{T}
h_+(r)\,\mathcal S(r,x,L)\dd r,
\qquad
\mathcal S(r,x,L)=
\int_0^L\sin\!\left(2\pi x\left(1-\frac yL\right)\right)\cos(ry)\dd y,
 \label{eq:arch-source}
\end{align}
where throughout
\[
        h_+(r)=\operatorname{Re}\psi_\Gamma\!\left(\tfrac14+\tfrac{ir}2\right)-\log\pi .
\]
We set
\[
Q_{\rm prime}^{(c)}:=Q_{\psi_p^{(c)}},\qquad
Q_{\rm pole}:=Q_{\psi_0},\qquad
Q_{{\rm arch},T}:=Q_{\psi_{\R,T}},\qquad
Q_{{\rm arch},\infty}:=\lim_{T\to\infty}Q_{{\rm arch},T},
\]
the last limit existing entrywise (Lemma~\ref{lem:entry-id}), and define the
cutoff-free truncated Weil matrix
\begin{equation}\label{eq:Qinf-def}
        \Qinf:=Q_{\rm prime}^{(c)}+Q_{\rm pole}+Q_{{\rm arch},\infty}.
\end{equation}

\begin{lemma}[Entry identification]\label{lem:entry-id}
The limit defining $Q_{{\rm arch},\infty}$ exists entrywise, and $\Qinf$ is
the \ccm\ Galerkin matrix of the Weil quadratic form: in the notation of
\cite[Eqs.~(3.10)--(3.11), (3.16)]{ConnesConsaniMoscovici2025},
\[
        \langle v,\Qinf v\rangle
        =\Wzt(F_v)-\WR(F_v)-\Wp(F_v),
        \qquad F_v(x)=q(f_v,f_v)(\log x),
\]
where $f_v$ is the trigonometric polynomial with coefficient vector $u$ on the
interval of length $L$.  In particular $-Q_{\rm prime}^{(c)}$, $Q_{\rm pole}$,
and $-Q_{{\rm arch},\infty}$ are the prime, pole, and archimedean blocks of
the \ccm\ assembly.
\end{lemma}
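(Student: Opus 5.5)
The plan is to reduce everything to one computation: the quadratic form attached to a general ``source'' test function. Both sides are sums over $m,n\in I_N$ of $u_mu_n$ times a matrix entry, so it suffices to match entries for each of the three pieces separately. Concretely, for a measure or density $\mu$ on $[0,L]$, take the source
\[
\psi_\mu(x)=\int_0^L\sin\!\bigl(2\pi x(1-y/L)\bigr)\,d\mu(y).
\]
The prime, pole and archimedean sources \eqref{eq:prime-source}--\eqref{eq:arch-source} all have this form:
\begin{itemize}
\item for the primes, $\mu$ is a sum of point masses at $y=\log q$ with weights $-\Lambda(q)/(\pi\sqrt q)$;
\item for the pole, $\mu$ has density $\pi^{-1}2\cosh(y/2)$;
\item for the archimedean piece, $\mu$ has density $\frac1{2\pi^2}\int_{-T}^{T}h_+(r)\cos(ry)\,dr$.
\end{itemize}

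The key step is then an elementary identity. For $f$ the trigonometric polynomial with coefficients $u$ on $[0,L]$, the \ccm\ convolution $q(f,f)(y)=\int f(x)\overline{f(x+y)}\,dx$, or whichever normalization \cite[(3.10)]{ConnesConsaniMoscovici2025} uses, is, for $0\le y\le L$, a sum $\sum_{m,n}u_mu_n\,k_{mn}(y)$. Integrating $e^{2\pi i(m x-n(x+y))/L}$ over the overlap interval $[0,L-y]$ gives a divided difference: $k_{mn}(y)$ is a constant multiple of
\[
\frac{\sin(2\pi m(1-y/L))-\sin(2\pi n(1-y/L))}{m-n},
\]
plus terms that cancel under the symmetry $u_{-k}=u_k$. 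On the diagonal the limit gives the derivative, $2\pi(1-y/L)\cos(2\pi m(1-y/L))$, which equals $\partial_x\sin(2\pi x(1-y/L))|_{x=m}$. Pairing with $\mu$ and exchanging the finite sum with the integral then yields
\[
\int q(f_v,f_v)\,d\mu=\langle v,Q_{\psi_\mu}v\rangle .
\]
Applied to each piece, this matches $-\Wp(F_v)$ with the prime block, using the explicit prime sum defining $\Wp$ at cutoff $c$. It matches $\Wzt(F_v)$, whose kernel is $\hat F(i/2)+\hat F(-i/2)$, giving the $2\cosh(y/2)$ density, with the pole block. The sign conventions, including the factor $1/\pi$ and the $\sqrt2$ even embedding, are fixed by carrying out this step once, carefully.

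The archimedean piece is the main obstacle, since it carries both the $T\to\infty$ limit and the identification with $\WR$. First, I would write $\WR(F)$ in its Fourier form $\frac1{2\pi}\int_\R h_+(r)\,\hat F(r)\,dr$, up to the normalization in \cite[(3.11)]{ConnesConsaniMoscovici2025}. Then $\hat F_v(r)$ is the cosine transform of $q(f_v,f_v)$ on $[0,L]$, and the entry computation above gives
\[
\langle v,Q_{{\rm arch},T}v\rangle=\frac1{2\pi^2}\int_{-T}^{T}h_+(r)\sum u_mu_n\,\mathcal S_{mn}(r)\,dr,
\]
with $\mathcal S_{mn}$ the divided differences of $\mathcal S(r,\cdot,L)$. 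Convergence as $T\to\infty$ entrywise is then the key analytic point. Since $h_+(r)\sim\log(|r|/2\pi)$, absolute convergence needs $\mathcal S_{mn}(r)=O(r^{-2})$. I would get this by integrating by parts twice in $y$: the boundary term at $y=L$ vanishes because $\sin(2\pi x\cdot 0)=0$. The boundary term at $y=0$ is $\sin(2\pi x)\sin(0)/r$, which vanishes at integer $x$; for divided differences one needs the $x$-dependence, but the boundary terms are still $\sin(2\pi m)$-type or $O(r^{-2})$. For the diagonal (derivative) entries the $y=0$ term gives $2\pi\cos(2\pi m)\cdot\sin(0)/r=0$ as well. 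Hence the integrand is $O(\log r/r^2)$, dominated convergence gives the limit, and the limit equals $-\WR(F_v)$ in the \ccm\ normalization.

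Summing the three identifications gives the stated formula for $\langle v,\Qinf v\rangle$. By polarization, since the entries of both sides are symmetric bilinear forms in $u$, the three blocks coincide with the prime, pole and archimedean blocks of the \ccm\ assembly.
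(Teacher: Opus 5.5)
Your proposal is correct in substance, and it takes a different route from the paper's proof.

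\textbf{The paper's route.} The paper does not derive the divided-difference structure inside this proof; it cites \cite[Prop.~4.1]{ConnesvanSuijlekom2025} for it. It then handles each block as follows:
\begin{itemize}
\item the prime block is treated as definitional;
\item the pole block is checked by evaluating $\psi_0(n)$ and $\psi_0'(n)$ in closed form and matching the matrix entry by entry with the explicit pole matrix of \cite[Lemma~4.1]{ConnesConsaniMoscovici2025};
\item $h_+$ is taken from the literature as the density of $\WR$;
\item the $T$-limit comes from the closed forms of $\mathcal S(r,n,L)$ and $\partial_x\mathcal S(r,x,L)|_{x=n}$ derived in the proof of Theorem~\ref{thm:tail-order}, with numerical cross-checks on top.
\end{itemize}

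\textbf{Your route.} You prove one overlap-autocorrelation identity in $y=\log x$. This is Lemma~\ref{lem:source-calculus} after the substitution $\omega=1-y/L$, since $K_v(1-y/L)$ is a constant multiple of $\int_0^{L-y}f_v(x)f_v(x+y)\,dx$. You then read each block off the kernel of its functional. This is more uniform and self-contained: it shows why all three blocks are divided-difference matrices. Your integration by parts also gives $O(r^{-2})$ uniformly for bounded real $x$ without any closed forms. Note that the first boundary term vanishes for every $x$, not only at integers; the rate comes from the $O(r^{-1})$ boundary term of the second integration times the prefactor $r^{-1}$.

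\textbf{What each buys.} The paper's route gives a concrete normalization check: the verbatim match of $C_c$ and $\beta$ for the pole block. You instead leave the constants to be ``fixed by carrying out this step once.'' Both routes import the normalizations of \cite[Eqs.~(3.10)--(3.11)]{ConnesConsaniMoscovici2025} and the equivalence of $\WR$ with its Fourier form.

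\textbf{Two points to tighten.}

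First, the sign you wrote for the Fourier form of $\WR$ contradicts your own conclusion. The proof of Theorem~\ref{thm:dictionary} gives $\langle v,Q_{{\rm arch},\infty}v\rangle=\frac1{2\pi}\int_\R h_+(r)g_v(r)\,dr$, and $g_v$ is a positive multiple of $\hat F_v$. So the limit can equal $-\WR(F_v)$ only if $\WR$ is minus a positive multiple of $\int h_+\hat F$.

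Second, you remove the imaginary parts of the overlap integrals using $u_{-k}=u_k$. Your closing polarization therefore identifies only the even-sector compressions of the blocks. But $\Qinf$ is a $(2N+1)\times(2N+1)$ matrix used on all of $\C^{I_N}$ in Corollary~\ref{cor:tail-cert}. The fix is to use instead that each functional is even in $y$ and that $q(f,f)(-y)=\overline{q(f,f)(y)}$. The pairing then takes the real part of $\sum_{m,n}u_m\bar u_n k_{mn}(y)$. The symmetrized kernel $\frac12\bigl(k_{mn}+\overline{k_{nm}}\bigr)$ is a constant multiple of the real divided difference for arbitrary $u\in\C^{I_N}$, which gives the full-matrix identification.
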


\begin{proof}
Each block is the divided-difference matrix of its source in the sense of
\cite[Prop.~4.1]{ConnesvanSuijlekom2025}.  For the prime block,
\eqref{eq:prime-source} is the source of the non-archimedean functional
\cite[Eq.~(3.16)]{ConnesConsaniMoscovici2025}, so the identification is
definitional.  For the pole block, direct evaluation of \eqref{eq:pole-source}
(as in Corollary~\ref{cor:pole-neutral} below) gives
$\psi_0(n)=C_c\,n/(n^2+\beta^2)$ and
$\psi_0'(n)=C_c\,(\beta^2-n^2)/(n^2+\beta^2)^2$ with $\beta=L/(4\pi)$ and
$C_c=L(\sqrt c+1/\sqrt c-2)/(2\pi^2)$, so that, using
$\beta^2=L^2/(16\pi^2)$ and
$16\pi^2C_c=8L(\sqrt c+1/\sqrt c-2)=32L\sinh^2(L/4)$,
\[
 (Q_{\rm pole})_{mn}
 =C_c\frac{\beta^2-mn}{(m^2+\beta^2)(n^2+\beta^2)}
 =\frac{32L\sinh^2(L/4)\,\bigl(L^2-16\pi^2mn\bigr)}
       {(L^2+16\pi^2m^2)(L^2+16\pi^2n^2)},
\]
which is the pole matrix of \cite[Lemma~4.1]{ConnesConsaniMoscovici2025}
verbatim.  For the archimedean block, $h_+$ is the standard
archimedean density of the explicit formula for the completed zeta function,
i.e.\ the density of $\WR$ in
\cite[Sec.~3]{ConnesConsaniMoscovici2025} and
\cite[Eq.~(153)]{ConnesConsani2020}; the entrywise $T$-limit exists because at
integer nodes $\mathcal S(r,n,L)=O(r^{-2})$ and
$\partial_x\mathcal S(r,x,L)|_{x=n}=O(r^{-2})$ uniformly on $I_N$
(both are computed in closed form in the proof of
Theorem~\ref{thm:tail-order}) while $h_+(r)=O(\log r)$.  The released package
additionally verifies the identification numerically: the closed-form \ccm\
assembly and the source assembly \eqref{eq:Qinf-def} agree on generic (not
pole- or moment-neutral) vectors to the working tail bound, e.g.\ to
$2.0\cdot10^{-10}$ at $(c,N)=(29,6)$ and to $2.1\cdot10^{-15}$ at
$(c,N)=(13,4)$ (Section~\ref{sec:verification}).
\end{proof}

\begin{lemma}[Admissibility]\label{lem:admissible}
For every finite vector $v$, the induced function $g_v$ is an admissible
Guinand--Weil test function: it is entire of exponential type at most $L$, its
Fourier transform is compactly supported in $[-\Delta,\Delta]$, and
\[
        g_v(z)=O\bigl((1+|\operatorname{Re}z|)^{-2}\bigr)
\]
uniformly on every fixed horizontal strip.  Consequently $g_v$ lies in the
class for which the Guinand--Weil explicit formula holds with absolutely
convergent zero sum
\cite[Sec.~2]{Bombieri2000}: even, entire of finite exponential type,
$\wh g_v$ continuous of compact support, and decay
$O((1+|z|)^{-1-\delta})$ on horizontal strips with $\delta=1$.
\end{lemma}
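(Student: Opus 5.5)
The plan is to read off everything from the regularity of the Fourier weight $\wh g_v$. First I establish the structure of $K_v$. Since $T_v$ is a trigonometric polynomial, $K_v(\omega)=2\int_0^\omega T_v(t)T_v(\omega-t)\dd t$ is a finite combination of terms $\omega e^{2\pi i m\omega}$ (from $m=n$) and $(e^{2\pi i m\omega}-e^{2\pi i n\omega})/(2\pi i(m-n))$ (from $m\ne n$). In particular $K_v$ is entire, real-valued for the symmetric real coefficients $u$, and satisfies $K_v(0)=0$. Hence $\wh g_v(\xi)=\pi K_v(1-|\xi|/\Delta)$ is continuous on $\R$: at $|\xi|=\Delta$ the argument is $0$, so the weight vanishes there and matches the zero extension. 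It is even by construction, supported in $[-\Delta,\Delta]$, and smooth on each of $(-\Delta,0)$ and $(0,\Delta)$. Compact support of the Fourier transform then gives, by the Paley--Wiener theorem (or simply by differentiating under the integral over a compact interval), that $g_v$ is entire with $|g_v(x+iy)|\le \|\wh g_v\|_{L^1}e^{2\pi\Delta|y|}=\|\wh g_v\|_{L^1}e^{L|y|}$. This is exponential type at most $2\pi\Delta=L$. Evenness of $g_v$ follows from evenness of the weight.

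For the decay I integrate by parts twice on each of the three pieces $[-\Delta,0]$, $[0,\Delta]$. Write $z=x+iy$ with $|y|\le Y$. On each piece $\wh g_v$ is $C^\infty$ up to the endpoints, so
\[
g_v(z)=\Bigl[\wh g_v(\xi)\tfrac{e^{2\pi iz\xi}}{2\pi iz}\Bigr]-\Bigl[\wh g_v'(\xi)\tfrac{e^{2\pi iz\xi}}{(2\pi iz)^2}\Bigr]+\frac{1}{(2\pi iz)^2}\int\wh g_v''(\xi)e^{2\pi iz\xi}\dd\xi,
\]
with the brackets summed over the endpoints of both pieces. The $1/z$ boundary terms cancel: at $\pm\Delta$ they vanish because $K_v(0)=0$, and at $\xi=0$ they cancel because $\wh g_v$ is continuous there. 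What remains is bounded by $C_Y|z|^{-2}$, with $C_Y$ depending on $\sup|\wh g_v'|,\sup|\wh g_v''|$ and on $e^{2\pi\Delta Y}$. Since $g_v$ is also bounded on the strip, this gives $g_v(z)=O((1+|\operatorname{Re}z|)^{-2})$ uniformly for $|\operatorname{Im}z|\le Y$. Note the possible kink of $\wh g_v$ at $0$ (the one-sided derivatives are $\mp\pi K_v'(1)/\Delta$) only contributes at order $|z|^{-2}$, which is why $\delta=1$ and no better is claimed.

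Finally I match the conditions of \cite[Sec.~2]{Bombieri2000}: evenness, entirety of finite exponential type, continuity and compact support of $\wh g_v$, and decay $O((1+|z|)^{-1-\delta})$ on strips with $\delta=1$. Together these make the zero sum absolutely convergent via the zero-counting bound $N(T)=O(T\log T)$, since $\sum_\gamma(1+|\gamma|)^{-2}<\infty$. The decay also makes the archimedean integral against the $O(\log|r|)$ density absolutely convergent.

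The main obstacle is the cancellation of the $1/z$ boundary terms. It hinges on the single fact $K_v(0)=0$, which is immediate from the Volterra form, together with continuity at the origin. I would state this explicitly rather than appeal to general Paley--Wiener, since a jump at $\pm\Delta$ would only give $O(1/|z|)$, which is not summable over zeros.
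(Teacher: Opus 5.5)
Your proposal is correct and follows essentially the paper's route: $K_v$ is an entire exponential polynomial with $K_v(0)=0$, so $\wh g_v$ is continuous, piecewise smooth, and vanishes at $\pm\Delta$; two integrations by parts then give the Paley--Wiener bound and the $O(|z|^{-2})$ decay on strips, and a zero-counting estimate gives absolute convergence. The differences are cosmetic: the paper does the second integration by parts in the Stieltjes sense against the bounded-variation zero extension of $(\wh g_v)'$ and uses the local count $N(t+1)-N(t)=O(\log t)$, whereas you split at $\xi=0$, track the one-sided boundary terms explicitly, and use $N(T)=O(T\log T)$, which is the same argument made concrete.
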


\begin{proof}
The Volterra convolution of finite exponential polynomials is entire in
$\omega$.  Hence $\wh g_v$ is compactly supported, continuous, piecewise
smooth, and vanishes at $\pm\Delta$; the zero extension of $(\wh g_v)'$ has
bounded variation.  The Paley--Wiener bound and the displayed decay follow by
two integrations by parts, the second in the Stieltjes sense
\cite{Rudin1987,Katznelson2004}.  The Riemann--von Mangoldt local zero count
$N(t+1)-N(t)=O(\log t)$ then gives absolute convergence of the zero sum
\cite[Ch.~9]{Titchmarsh1986}.
\end{proof}

\begin{lemma}[Finite source calculus]\label{lem:source-calculus}
Let
\[
        \psi_{\alpha,\omega}(x)=\frac{\alpha}{\pi}\sin(2\pi\omega x).
\]
For the finite divided-difference matrix attached to this source,
\[
        \langle v,Q_{\alpha,\omega}v\rangle=\alpha K_v(\omega).
\]
Consequently, if $\mu$ is a finite signed Borel measure on $[0,1]$ and
\[
        \psi_\mu(x)=\frac1\pi\int_{[0,1]}\sin(2\pi\omega x)\dd\mu(\omega),
\]
then
\[
        \langle v,Q_\mu v\rangle
        =\int_{[0,1]}K_v(\omega)\dd\mu(\omega).
\]
\end{lemma}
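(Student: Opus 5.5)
The plan is to expand $K_v(\omega)$ directly in the exponential basis and match it, entry by entry, with the divided-difference matrix of $\psi_{\alpha,\omega}$. The measure version then follows by linearity and Fubini. By linearity in $\alpha$ it suffices to treat $\alpha=1$.

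\emph{Step 1: expand the kernel.} Write
\[
T_v(t)\,T_v(\omega-t)=\sum_{m,n=-N}^{N}u_mu_n\,e^{2\pi i n\omega}e^{2\pi i(m-n)t}
\]
and integrate over $[0,\omega]$ term by term.
\begin{itemize}
\item For $m\ne n$ the integral is $\bigl(e^{2\pi i m\omega}-e^{2\pi i n\omega}\bigr)/\bigl(2\pi i(m-n)\bigr)$.
\item For $m=n$ it is $\omega\,e^{2\pi i n\omega}$.
\end{itemize}
Hence
\[
K_v(\omega)=2\sum_{m\ne n}u_mu_n\frac{e^{2\pi i m\omega}-e^{2\pi i n\omega}}{2\pi i(m-n)}+2\omega\sum_n u_n^2e^{2\pi i n\omega}.
\]

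\emph{Step 2: symmetrize.} The coefficients satisfy $u_{-m}=u_m$ and are real. So the reindexing $(m,n)\mapsto(-m,-n)$ leaves the weights $u_mu_n$ and the sum invariant. Averaging each term with its reflected partner replaces
\[
\frac{e^{2\pi i m\omega}-e^{2\pi i n\omega}}{2\pi i(m-n)}
\quad\text{by}\quad
\frac{\sin(2\pi m\omega)-\sin(2\pi n\omega)}{2\pi(m-n)},
\]
and replaces $e^{2\pi i n\omega}$ on the diagonal by $\cos(2\pi n\omega)$. After multiplying by the prefactor $2$:
\begin{itemize}
\item the off-diagonal entries become $\bigl(\psi_{1,\omega}(m)-\psi_{1,\omega}(n)\bigr)/(m-n)$, with $\psi_{1,\omega}(x)=\pi^{-1}\sin(2\pi\omega x)$;
\item the diagonal entries become $2\omega\cos(2\pi n\omega)=\psi_{1,\omega}'(n)$.
\end{itemize}
This is exactly $\langle v,Q_{1,\omega}v\rangle$. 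It also shows that $K_v(\omega)$ is real, which the averaging step quietly uses. The main point to get right is this symmetrization bookkeeping: the factor $2$ in $K_v$ combines with the $\tfrac12$ from averaging, and the $\pi$ normalizations must match.

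\emph{Step 3: pass to measures.} The map $\psi\mapsto Q_\psi$ is linear. For the source $\psi_\mu(x)=\pi^{-1}\int_{[0,1]}\sin(2\pi\omega x)\,d\mu(\omega)$:
\begin{itemize}
\item each off-diagonal entry of $Q_{\psi_\mu}$ is the $\mu$-integral of the corresponding entry of $Q_{1,\omega}$, by definition;
\item for the diagonal, differentiate under the integral sign. This is justified because $\partial_x\sin(2\pi\omega x)=2\pi\omega\cos(2\pi\omega x)$ is bounded by $2\pi$ for $\omega\in[0,1]$, and $|\mu|$ is finite. In particular $\psi_\mu\in C^1$.
\end{itemize}
Contracting against the finitely many weights $u_mu_n$ and exchanging the finite sum with the $\mu$-integral then gives
\[
\langle v,Q_\mu v\rangle=\int_{[0,1]}\langle v,Q_{1,\omega}v\rangle\,d\mu(\omega)=\int_{[0,1]}K_v(\omega)\,d\mu(\omega),
\]
using the case $\alpha=1$ from Steps 1 and 2.
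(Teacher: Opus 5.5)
Your proof is correct and follows essentially the same route as the paper: expand $K_v$ in the exponential basis, match the off-diagonal and diagonal terms with the divided-difference entries of $\psi_{\alpha,\omega}$, and pass to measures by linearity, differentiation under the integral, and interchange of the finite sum with the $\mu$-integral. Your averaging over $(m,n)\mapsto(-m,-n)$ is the same mechanism as the paper's ``take real parts, imaginary parts cancel in pairs,'' because for real even $u$ the reflected summand is the complex conjugate of the original; note that the averaging does not need the reality of $K_v$ in advance but rather proves it.
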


\begin{proof}
For $m\ne n$,
\[
 (Q_{\alpha,\omega})_{mn}=
 \frac{\alpha(\sin(2\pi\omega m)-\sin(2\pi\omega n))}
      {\pi(m-n)},
\]
and the diagonal entry is
\[
        (Q_{\alpha,\omega})_{mm}=2\alpha\omega\cos(2\pi\omega m).
\]
On the other hand
\[
K_v(\omega)=
2\sum_{m,n=-N}^{N}u_m u_n e^{2\pi i n\omega}
 \int_0^\omega e^{2\pi i(m-n)t}\dd t .
\]
For $m\ne n$ the real part of the summand is
\[
u_m u_n
\frac{\sin(2\pi\omega m)-\sin(2\pi\omega n)}
     {\pi(m-n)},
\]
while for $m=n$ it is $2u_m^2\omega\cos(2\pi\omega m)$.  The imaginary parts
cancel in pairs under the even symmetry $u_{-k}=u_k$.  Multiplying by $\alpha$
gives the identity.  For a finite signed measure, each single-frequency entry
is continuous and bounded on $[0,1]$; on the diagonal,
\[
        \psi_\mu'(x)=2\int_{[0,1]}\omega\cos(2\pi\omega x)\dd\mu(\omega).
\]
The finite double sum may therefore be interchanged with the source integral.
\end{proof}

\begin{corollary}[Finite source quotient]\label{cor:source-quotient}
At level $N$, the finite source-to-form map $\mu\mapsto Q_\mu$ factors exactly
through the $2N+1$ coordinates
\[
        \int\omega\,\dd\mu,\quad
        \int\sin(2\pi k\omega)\dd\mu,\quad
        \int\omega\cos(2\pi k\omega)\dd\mu
        \quad(1\le k\le N),
\]
and the induced map on these coordinates is injective: two finite signed
sources produce the same level-$N$ matrix if and only if all $2N+1$
coordinates agree.
\end{corollary}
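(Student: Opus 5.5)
The plan is to work entrywise with the full $(2N{+}1)\times(2N{+}1)$ matrix $Q_\mu$ on $I_N$. By the definition of the divided-difference matrix, $Q_\mu$ is determined by the $4N+2$ numbers $\psi_\mu(m)$ and $\psi_\mu'(m)$ for $m\in I_N$. Conversely, we will show these numbers can be read back off the matrix. The parity of the sine source then halves this data to exactly $2N+1$ independent quantities, and these are the displayed coordinates.

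\emph{Step 1 (factorization).} We compute the node values from the definition of $\psi_\mu$ and from the differentiated formula in the proof of Lemma~\ref{lem:source-calculus}:
\[
\psi_\mu(m)=\frac1\pi\int\sin(2\pi m\omega)\dd\mu,\qquad
\psi_\mu'(m)=2\int\omega\cos(2\pi m\omega)\dd\mu .
\]
Differentiation under the integral is justified as in that lemma, since $\mu$ is finite and the integrands are bounded on $[0,1]$. The map $m\mapsto\psi_\mu(m)$ is odd and vanishes at $m=0$, so it is determined by $\int\sin(2\pi k\omega)\dd\mu$ for $1\le k\le N$. The map $m\mapsto\psi_\mu'(m)$ is even, so it is determined by $\int\omega\cos(2\pi k\omega)\dd\mu$ for $0\le k\le N$, where $k=0$ gives $\int\omega\dd\mu$. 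This is $N+(N+1)=2N+1$ coordinates. Every entry of $Q_\mu$ is an explicit linear combination of them: off the diagonal we have $(\psi_\mu(m)-\psi_\mu(n))/(m-n)$, and on the diagonal $\psi_\mu'(m)$. Hence $\mu\mapsto Q_\mu$ factors linearly through the coordinate map $\Phi:\mu\mapsto(\text{coordinates})\in\R^{2N+1}$.

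\emph{Step 2 (injectivity of the induced map).} Let $A:\R^{2N+1}\to\mathrm{Sym}_{2N+1}$ be the induced linear map, so that $Q_\mu=A(\Phi(\mu))$. We show $A$ has a left inverse.
\begin{itemize}
\item The diagonal entries $(Q_\mu)_{kk}$ for $0\le k\le N$ return $\psi_\mu'(k)$, hence all cosine-moment coordinates, including $\int\omega\dd\mu$.
\item Using the row $n=0$ and $\psi_\mu(0)=0$, the entries $(Q_\mu)_{k0}=\psi_\mu(k)/k$ for $1\le k\le N$ return $k(Q_\mu)_{k0}=\psi_\mu(k)$, hence all sine coordinates after multiplying by $\pi$.
\end{itemize}
So if $Q_\mu=Q_\nu$ then $\Phi(\mu)=\Phi(\nu)$. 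The converse implication is Step 1. Together these give the stated ``if and only if''.

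\emph{Main obstacle.} The only delicate point is the meaning of ``level-$N$ matrix''. The statement must be read as the full matrix $Q_\mu$ on $I_N$, not the even-sector quadratic form $v\mapsto\langle v,Q_\mu v\rangle$. The even contraction only sees the symmetrized matrix restricted to symmetric vectors, and for it the recovery in Step 2 would need a separate argument.

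Under the full-matrix reading, nothing beyond the parity bookkeeping is required. I would still add a remark that the image of $\Phi$ is all of $\R^{2N+1}$. Point masses $\delta_\omega$ suffice: the functions $1,\omega,\sin(2\pi k\omega),\omega\cos(2\pi k\omega)$ are linearly independent on $[0,1]$, so $\Phi$ of point masses span $\R^{2N+1}$. This shows that the quotient has dimension exactly $2N+1$ and no smaller.
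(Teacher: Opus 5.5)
Your proof is correct under the literal reading in which $Q_\mu$ is the full divided-difference matrix on $I_N$. It takes a different route from the paper.

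\textbf{The paper's route.} The paper's proof does not use node values. It passes through Lemma~\ref{lem:source-calculus} and polarizes the Volterra kernel. The generators are $K_{1,1}=2\omega$, $K_{1,\cos(2\pi kt)}=\sin(2\pi k\omega)/(\pi k)$ and $K_{\cos(2\pi kt),\cos(2\pi kt)}=\omega\cos(2\pi k\omega)+\sin(2\pi k\omega)/(2\pi k)$; by product-to-sum, mixed frequencies contribute only sines. The $2N+1$ coordinates therefore appear as the span of the kernels $K_v$, on the test-function side of the dictionary. Independence comes from the exponential polynomials $e^{2\pi ik\omega}$ and $\omega e^{2\pi ik\omega}$. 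Injectivity is read off the even-sector entries $(0,0)$, $(k,0)$ and $(k,k)$.

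\textbf{What each route buys.} Your bookkeeping is more elementary: odd node values vanishing at $0$, plus even node derivatives, give $N+(N+1)=2N+1$. It makes the count transparent and gives the factorization for the whole $(2N{+}1)\times(2N{+}1)$ matrix, not only for the even contraction. The paper's route gives the stronger injectivity: it recovers the coordinates from the even-sector form alone, which is the object entering Theorem~\ref{thm:dictionary}.

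\textbf{The deferred even-sector case.} The ``separate argument'' you postpone for the even-sector reading takes two lines in your notation. Let $E$ be the matrix of $v\mapsto\langle v,Q_\mu v\rangle$ on $\R^{N+1}$. Parity of $\psi_\mu$ and $\psi_\mu'$ gives
$E_{00}=\psi_\mu'(0)$, $E_{0k}=\sqrt2\,\psi_\mu(k)/k$, and $E_{kk}=\psi_\mu'(k)+\psi_\mu(k)/k$.
This triangular system returns every coordinate, so the statement holds under either reading.

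\textbf{A minor slip.} The constant $1$ is not one of the coordinate functions. Listing it in your spanning remark is harmless, since a subfamily of an independent family is independent, but it should be dropped.
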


\begin{proof}
After polarization, the Volterra kernels are generated by
$K_{1,1}=2\omega$,
$K_{1,\cos(2\pi kt)}=\sin(2\pi k\omega)/(\pi k)$, and
\[
K_{\cos(2\pi kt),\cos(2\pi kt)}
=\omega\cos(2\pi k\omega)+\frac{\sin(2\pi k\omega)}{2\pi k};
\]
product-to-sum gives the mixed cases, and only frequencies $0,1,\ldots,N$
occur.  Linear independence follows from the exponential-polynomial family
$e^{2\pi ik\omega}$ and $\omega e^{2\pi ik\omega}$.  Inspecting the entries
$(0,0)$, $(k,0)$, and $(k,k)$ recovers the displayed coordinates, so the
quotient is exact in both directions.
\end{proof}

\subsection{The dictionary theorem}

\begin{theorem}[Finite Guinand--Weil dictionary]\label{thm:dictionary}
For fixed $c>1$, $N\ge0$, and every real even finite Galerkin vector
$v\in\R^{N+1}$,
\[
        \langle v,\Qinf v\rangle
        =
        \sum_{z\in Z_\zeta^*}g_v(z),
\qquad
        Z_\zeta^*=\{z\in\C:\ 1/2+iz\ \text{is a nontrivial zero of}\ \zeta\},
\]
with multiplicity.  Equivalently,
\[
\begin{aligned}
\langle v,\Qinf v\rangle
  ={}&-\frac1\pi\sum_{q=p^a\le c}\frac{\Lambda(q)}{\sqrt q}
        \wh g_v\!\left(\frac{\log q}{2\pi}\right)
     +2g_v(i/2)
      +\frac{1}{2\pi}\int_{\R}h_+(r)g_v(r)\dd r .
\end{aligned}
\]
\end{theorem}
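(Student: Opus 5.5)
The plan is to prove the second (three-term) identity block by block using Lemma~\ref{lem:source-calculus}, and then obtain the zero-sum form from the Guinand--Weil explicit formula, which applies to $g_v$ by Lemma~\ref{lem:admissible}. Since $\Qinf$ is the sum \eqref{eq:Qinf-def}, it suffices to write each source $\psi$ as $\frac1\pi\int\sin(2\pi\omega x)\,d\mu(\omega)$ for an explicit finite signed measure $\mu$ on $[0,1]$. Then $\langle v,Q_\psi v\rangle=\int K_v\,d\mu$, and we rewrite this integral through the substitution $\omega=1-|\xi|/\Delta$, i.e.\ $\xi=\Delta(1-\omega)=y/(2\pi)$ when $\omega=1-y/L$. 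Under this substitution $K_v(\omega)=\pi^{-1}\wh g_v(\xi)$.

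\emph{Prime block.} Here $\mu=-\sum_{q\le c}\Lambda(q)q^{-1/2}\delta_{1-\log q/L}$. Then
\[
\langle v,Q_{\rm prime}^{(c)}v\rangle=-\sum_{q\le c}\Lambda(q)q^{-1/2}K_v(1-\log q/L)=-\frac1\pi\sum_{q\le c}\Lambda(q)q^{-1/2}\,\wh g_v(\log q/(2\pi)).
\]
The endpoint $q=c$ contributes $K_v(0)=0$, so whether $c$ is counted does not matter.

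\emph{Pole block.} Here $d\mu=2\cosh(y/2)\,dy$ with $\omega=1-y/L$. This gives $\int_0^L 2\cosh(y/2)\,\pi^{-1}\wh g_v(y/2\pi)\,dy$. Setting $y=2\pi\xi$ turns this into $2\int_0^\Delta 2\cosh(\pi\xi)\wh g_v(\xi)\,d\xi=2\int_{-\Delta}^{\Delta}\wh g_v(\xi)e^{2\pi i(i/2)\xi}\,d\xi$ by evenness, which equals $2g_v(i/2)$. Since $g_v(i/2)=g_v(-i/2)$, this is the standard pole contribution $g(i/2)+g(-i/2)$.

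\emph{Archimedean block at cutoff $T$.} For fixed $r$ the measure is $\frac{1}{2\pi^2}h_+(r)\cos(ry)\,dy$, integrated over $r\in[-T,T]$. Fubini is legitimate on the compact range, and the same change of variables gives
\[
\frac{1}{2\pi^2}\int_{-T}^T h_+(r)\int_0^L\cos(ry)\,\pi^{-1}\wh g_v(y/2\pi)\,dy\,dr.
\]
The inner integral equals $\frac{1}{\pi}\cdot2\pi\int_0^\Delta\cos(2\pi r\xi)\wh g_v(\xi)\,d\xi=g_v(r)$. The block is therefore $\frac{1}{2\pi}\int_{-T}^T h_+(r)g_v(r)\,dr$. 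Letting $T\to\infty$, the left side converges entrywise by Lemma~\ref{lem:entry-id}. The right side converges absolutely because $g_v(r)=O(r^{-2})$ by Lemma~\ref{lem:admissible} and $h_+(r)=O(\log r)$. This yields the third term.

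It then remains to invoke the explicit formula in the normalization of \cite[Sec.~2]{Bombieri2000} for even $g$ with $\wh g$ supported in $[-\Delta,\Delta]$:
\[
\sum_{z}g(z)=g(i/2)+g(-i/2)-\frac1\pi\sum_{n}\frac{\Lambda(n)}{\sqrt n}\wh g\Bigl(\frac{\log n}{2\pi}\Bigr)+\frac1{2\pi}\int h_+(r)g(r)\,dr.
\]
The prime sum truncates to $n\le c$ because $\log n/(2\pi)>\Delta$ otherwise.

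I expect the main obstacle to be normalization bookkeeping. One has to check that the constants $1/\pi$ and $1/(2\pi^2)$ in the sources, the factor $\pi$ in $\wh g_v$, and the factor $2$ in $K_v$ match the explicit-formula constants. These include the coefficient $1/\pi$ in front of the prime sum, which arises from $\wh g$ in the $e^{2\pi i z\xi}$ convention with the sum over $\pm\log n$ combined by evenness. They also include whether the archimedean density is written as $\operatorname{Re}\psi(1/4+ir/2)-\log\pi$ with weight $1/(2\pi)$. I would verify these first on a single-frequency source $\psi_{\alpha,\omega}$ against a known test function, and I would cross-check the pole term using the closed form $\psi_0(n)=C_c n/(n^2+\beta^2)$ from Lemma~\ref{lem:entry-id}.
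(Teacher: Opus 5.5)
Your proposal is correct and follows the paper's proof essentially step for step: Lemma~\ref{lem:source-calculus} applied block by block with the substitution $\xi=\Delta(1-\omega)$, dominated convergence as $T\to\infty$, and then the explicit formula, with the prime sum truncated because $\supp\wh g_v\subset[-\Delta,\Delta]$. There is one bookkeeping slip to fix: in the archimedean block the per-$r$ measure is $\frac{1}{2\pi}h_+(r)\cos(ry)\,dy$, not $\frac{1}{2\pi^2}h_+(r)\cos(ry)\,dy$, since the prefactor $\frac1\pi$ in $\psi_\mu$ absorbs one factor of $\pi$; your displayed intermediate integral is therefore off by $1/\pi$, although your final value $\frac{1}{2\pi}\int_{-T}^{T}h_+(r)g_v(r)\,dr$ is correct.
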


\begin{proof}
Lemma~\ref{lem:source-calculus} applied to the prime source
\eqref{eq:prime-source}, whose measure places mass
$-\Lambda(q)/\sqrt q$ at $\omega_q=1-\log q/L$, gives
\[
 \langle v,Q_{\rm prime}^{(c)}v\rangle
 =-\frac1\pi\sum_{q\le c}\frac{\Lambda(q)}{\sqrt q}\,\pi K_v(\omega_q)
 =-\frac1\pi\sum_{q\le c}\frac{\Lambda(q)}{\sqrt q}
   \wh g_v\!\left(\frac{\log q}{2\pi}\right),
\]
using $\wh g_v(\xi)=\pi K_v(1-\xi/\Delta)$ and $\log q/(2\pi)=\Delta(1-\omega_q)$.
Applied to the pole source, whose measure is the push-forward of
$2\cosh(y/2)\,L\dd y$ under $\omega=1-y/L$, it gives
\[
 \langle v,Q_{\rm pole}v\rangle
 =2L\int_0^1 K_v(\omega)\cosh\!\Bigl(\frac{L(1-\omega)}2\Bigr)\dd\omega
 =\int_{-\Delta}^{\Delta}\wh g_v(\xi)\,2e^{-\pi\xi}\dd\xi
 =2g_v(i/2),
\]
where the middle equality is the substitution $\xi=\Delta(1-\omega)$ together
with $\pi\Delta=L/2$, $\wh g_v(\xi)=\pi K_v(1-\xi/\Delta)$, and the evenness
of $\wh g_v$ (which converts $2\cosh(\pi\xi)$ into $2e^{-\pi\xi}$ under the
integral), and the last equality is $e^{2\pi i(i/2)\xi}=e^{-\pi\xi}$.
Applied inside the compact finite-$T$ archimedean integral,
\[
 \langle v,Q_{{\rm arch},T}v\rangle
 =\frac1{2\pi}\int_{-T}^{T}h_+(r)
     \left(\int_0^LK_v(1-y/L)\cos(ry)\dd y\right)\dd r
 =\frac1{2\pi}\int_{-T}^{T}h_+(r)g_v(r)\dd r,
\]
where the inner integral is $g_v(r)$ by the same substitution
$\xi=\Delta(1-y/L)$ and evenness of $\wh g_v$.
Lemma~\ref{lem:admissible} and $h_+(r)=O(\log(2+|r|))$
\cite{DLMFDigammaAsymptotic} allow
$T\to\infty$ by dominated convergence.  Thus the assembly
\eqref{eq:Qinf-def} equals the displayed prime, pole, and archimedean source
expression.

The Guinand--Weil explicit formula in the normalization of Connes, Bombieri,
and \ccm\ \cite{Guinand1948,Weil1952,Connes1999,Bombieri2000,
ConnesConsaniMoscovici2025} states, for every test function in the class of
Lemma~\ref{lem:admissible},
\[
\sum_{z\in Z_\zeta^*}g_v(z)
  =-\frac1\pi\sum_{q=p^a}\frac{\Lambda(q)}{\sqrt q}
        \wh g_v\!\left(\frac{\log q}{2\pi}\right)
     +2g_v(i/2)
      +\frac{1}{2\pi}\int_{\R}h_+(r)g_v(r)\dd r .
\]
Since $\supp\wh g_v\subset[-\Delta,\Delta]=[-L/(2\pi),L/(2\pi)]$, prime powers
$q>c$ satisfy $\log q/(2\pi)>\Delta$ and do not contribute.  This identifies
the finite form with the zero side.
\end{proof}

\begin{remark}[What is new here]\label{rem:prior}
Once $g_v$ is known to be admissible, the displayed identity is the explicit
formula applied to $g_v$; in that sense the zero-sum representation is the
classical mechanism of Weil \cite{Weil1952}, in the finite-rank setting
studied by Yoshida \cite{Yoshida1992} and built by \cvs/\ccm.  The content of
Theorem~\ref{thm:dictionary} is the exact closed-form transport
$v\mapsto g_v$ and its calculus: the truncated matrix is not merely
positive-semidefinite-approximating data, but evaluates exact zero sums for an
explicitly parametrized $(N{+}1)$-dimensional family of band-limited test
functions, with the source dependence factoring through the exact
$2N{+}1$-dimensional quotient of Corollary~\ref{cor:source-quotient}.  This is
the finite, inverse-free part of the correspondence between coefficient
vectors and test functions; no claim is made about realizing arbitrary
Guinand--Weil test functions this way.
\end{remark}

\begin{corollary}[Pole-neutral source survival]\label{cor:pole-neutral}
Let $\beta=L/(4\pi)$ and, for $s\ge0$, put
\[
        H_s(N)=\{v:M_0(v)=M_2(v)=\cdots=M_{2s}(v)=0\},
\]
where
\[
        M_0(v)=v_0+\sqrt2\sum_{k=1}^N v_k,\qquad
        M_{2j}(v)=\sqrt2\sum_{k=1}^N k^{2j}v_k\quad(j\ge1).
\]
Define the pole-neutral hyperplane
\[
        P_N(c)=
        \left\{
        v:\frac{v_0}{\beta^2}
          +\sqrt2\sum_{k=1}^N\frac{v_k}{k^2+\beta^2}=0
        \right\}.
\]
If $N\ge s+2$, then
\[
        \dim(H_s(N)\cap P_N(c))=N-s-1>0.
\]
Every nonzero vector in this space produces a nonzero test function $g_v$
with $g_v(i/2)=0$, and $g_v=g_w$ implies $v=w$ or $v=-w$.  On this family the
dictionary of Theorem~\ref{thm:dictionary} has no pole term.
\end{corollary}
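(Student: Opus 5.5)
The proof has three parts: a dimension count for the $s+2$ linear functionals, the identification of the pole term with the squared hyperplane functional, and a non-vanishing and injectivity argument for $v\mapsto g_v$.

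\emph{Dimension.} I will show that the $s+2$ functionals $M_0,M_2,\dots,M_{2s}$ and the functional $P(v)=v_0/\beta^2+\sqrt2\sum_k v_k/(k^2+\beta^2)$ defining $P_N(c)$ are linearly independent on $\R^{N+1}$. Then $\dim(H_s(N)\cap P_N(c))=(N+1)-(s+2)=N-s-1$. Suppose $a_0M_0+\sum_{j=1}^s a_jM_{2j}+bP=0$, and put $p(x)=a_0+\sum_{j\ge1}a_jx^j$, a polynomial of degree $\le s$.
\begin{itemize}
\item Reading off the coefficient of $v_0$ gives $p(0)+b/\beta^2=0$.
\item Reading off the coefficient of $v_k$ (after dividing by $\sqrt2$) gives $p(k^2)+b/(k^2+\beta^2)=0$ for $1\le k\le N$.
\end{itemize}
Hence the polynomial $(x+\beta^2)p(x)+b$, of degree $\le s+1$, vanishes at the $N+1\ge s+3$ distinct points $x=0,1,4,\dots,N^2$. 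It is therefore identically zero, so $(x+\beta^2)p(x)$ is the constant $-b$. This forces $p=0$ and then $b=0$.

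\emph{Pole term.} By the proof of Theorem~\ref{thm:dictionary}, $\langle v,Q_{\rm pole}v\rangle=2g_v(i/2)$. Lemma~\ref{lem:entry-id} gives $(Q_{\rm pole})_{mn}=C_c(\beta^2 a_ma_n-b_mb_n)$ with $a_m=1/(m^2+\beta^2)$ and $b_m=m/(m^2+\beta^2)$. For symmetric $u$ we have $\sum_m u_mb_m=0$ because $b_m$ is odd in $m$. Hence
\[
2g_v(i/2)=C_c\beta^2\Bigl(\sum_m u_ma_m\Bigr)^2=C_c\beta^2\,P(v)^2,
\]
using $u_{\pm k}=v_k/\sqrt2$. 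So $g_v(i/2)=0$ exactly on $P_N(c)$, and the pole term in the dictionary vanishes there.

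\emph{Non-vanishing and injectivity up to sign.} Since $g_v$ is the Fourier transform of the continuous compactly supported weight $\wh g_v$, the function $g_v$ determines $\wh g_v$. Hence it determines $K_v$ on $[0,1]$, and by analyticity (Lemma~\ref{lem:admissible}) on all of $[0,\infty)$. Now $K_v=2\,T_v*T_v$ is a one-sided convolution on $[0,\omega]$. Its Laplace transform is therefore $2\widetilde T_v(s)^2$, where
\[
\widetilde T_v(s)=\sum_m u_m/(s-2\pi im)
\]
is a rational function that is nonzero whenever $u\neq0$, since its poles are distinct. Thus $v\ne0$ gives $K_v\ne0$, hence $g_v\ne0$. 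Likewise $g_v=g_w$ gives $\widetilde T_v^2=\widetilde T_w^2$, so $\widetilde T_v=\pm\widetilde T_w$ as rational functions. Comparing residues then gives $u=\pm u'$, i.e.\ $v=\pm w$.

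I expect the main obstacle to be justifying the Laplace-transform step cleanly: $T_v$ must be treated as a function on $[0,\infty)$ so that $K_v$, extended beyond $[0,1]$ by its entire continuation, is still the one-sided convolution. That holds because the formula $2\int_0^\omega T_v(t)T_v(\omega-t)\,dt$ is itself entire in $\omega$. Titchmarsh's convolution theorem would give non-vanishing as an alternative, but the rational-function argument handles both non-vanishing and the sign ambiguity at once.
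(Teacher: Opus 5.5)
Your proposal is correct, and the first two parts follow the paper. The paper also writes $Q_{\rm pole}$ as $C_c$ times the divided-difference matrix of $x/(x^2+\beta^2)$, drops the odd piece by symmetry to get $2g_v(i/2)=C_c\beta^2P(v)^2$, and proves the independence by the same degree-counting interpolation argument. Your single identity $(x+\beta^2)p(x)+b\equiv0$ is slightly more complete: the paper only shows that the pole row lies outside the span of the moment rows, and leaves the Vandermonde independence of the moment rows implicit.

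One bookkeeping point: the entry formula you quote from Lemma~\ref{lem:entry-id} is justified there by deferring to the direct evaluation of $\psi_0(n)$ and $\psi_0'(n)$ carried out in this very corollary. A self-contained proof should include that computation, since the derivative is what fixes the diagonal.

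The genuine difference is the non-vanishing and sign-injectivity step.
\begin{itemize}
\item The paper argues locally. It places $K_v=2\,T_v*T_v$ in the Volterra convolution algebra of analytic germs at $0$. This algebra is an integral domain, because the lowest coefficient of $\phi*\chi$ is a nonzero beta-integral multiple of the product of the lowest coefficients. Then $(T_v-T_w)*(T_v+T_w)=0$ forces $T_v=\pm T_w$.
\item You argue globally. You extend $K_v$ to $[0,\infty)$ through its entire defining integral, Laplace-transform the one-sided convolution into $2\widetilde T_v(s)^2$ in the field of rational functions, and read off the $u_m$ as residues at $2\pi i m$.
\end{itemize}
The germ argument needs only the behaviour of $K_v$ near $0$ and no growth or convergence control; it is the analytic case of Titchmarsh's convolution theorem. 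Your route needs the easy extension and convergence for $\operatorname{Re}s>0$, which hold since $T_v$ is bounded. In exchange it is constructive: it shows explicitly how $\pm u$ is recovered from $K_v$, and it settles non-vanishing and the sign ambiguity in one stroke.
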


\begin{proof}
Direct evaluation of the pole source \eqref{eq:pole-source} at integer
frequencies, and of its $x$-derivative at integer frequencies, gives
\[
        \psi_0(n)=
        C_c\frac{n}{n^2+\beta^2},
\qquad
        \psi_0'(n)=
        C_c\frac{\beta^2-n^2}{(n^2+\beta^2)^2},
\qquad
        C_c=\frac{L(\sqrt c+1/\sqrt c-2)}{2\pi^2}>0.
\]
Both evaluations are needed: off-diagonal entries of $Q_{\rm pole}$ use only
the integer values $\psi_0(n)$, but the diagonal entries are
$\psi_0'(n)$, and value agreement at integers alone would not determine them
(the diagonal involves all Fourier components of the source; see
\cite[Sec.~4.2]{ConnesvanSuijlekom2025}).  With both identities,
$Q_{\rm pole}$ is exactly $C_c$ times the divided-difference matrix of
$f(x)=x/(x^2+\beta^2)$, including the diagonal $f'(n)$.  From the partial
fractions $f(x)=\tfrac12\bigl((x-i\beta)^{-1}+(x+i\beta)^{-1}\bigr)$, the
divided difference is
\[
 \frac{f(m)-f(n)}{m-n}
 =\frac{\beta^2-mn}{(m^2+\beta^2)(n^2+\beta^2)},
\]
with the $m=n$ limit $f'(m)$; contracting against the even embedding, and
using $\sum_m u_m\,m/(m^2+\beta^2)=0$ by symmetry,
\[
\langle v,Q_{\rm pole}v\rangle
 =
 C_c\beta^2
 \left(
 \frac{v_0}{\beta^2}
 +\sqrt2\sum_{k=1}^N\frac{v_k}{k^2+\beta^2}
 \right)^2
 =
 2g_v(i/2).
\]
Thus $P_N(c)$ is exactly the pole-neutral row.

The row defining $P_N(c)$ is independent of the moment rows when $N\ge s+1$:
otherwise there would be a polynomial $P$ of degree at most $s$ with
$P(k^2)=1/(k^2+\beta^2)$ for $0\le k\le N$; then
$(x+\beta^2)P(x)-1$, of degree at most $s+1$, would vanish at the $N+1\ge s+2$
distinct points $k^2$, forcing it to vanish identically, which is impossible.
The dimension formula follows.

Finally, $K_v=2(T_v*T_v)$ in the Volterra convolution algebra of analytic
germs at $0$.  This algebra is an integral domain: if $\phi,\chi$ have lowest
nonvanishing orders $j,k$, the lowest coefficient of $\phi*\chi$ is a nonzero
beta-integral multiple of the product of lowest coefficients.  Hence
$K_v\equiv0$ implies $T_v=0$, so $v=0$.  If $g_v=g_w$, Fourier injectivity
gives $K_v=K_w$ on $[0,1]$, hence as germs; then
$(T_v-T_w)*(T_v+T_w)=0$, so $T_v=\pm T_w$ and $v=\pm w$.
\end{proof}

\subsection{A worked example over the first 512 zeros}\label{sec:example}

Take $c=13$, $N=4$, and the pole- and moment-neutral vector
$v\in H_0(4)\cap P_4(13)$ with $(v_2,v_3,v_4)=(1,0,-3)/\sqrt2$ and
$(v_0,v_1)$ determined by the two linear conditions; numerically
\[
 v=(-0.0859452,\;1.4749860,\;0.7071068,\;0,\;-2.1213203).
\]
The chain $v\mapsto T_v\mapsto K_v\mapsto\wh g_v\mapsto g_v$ for this vector
is displayed in Figure~\ref{fig:dictionary}.  The cutoff-free contraction,
computed from the closed-form \ccm\ assembly at 40 digits, is
\[
 \langle v,\Qinf v\rangle
 =0.049968414571096979730\ldots,
\]
and Theorem~\ref{thm:dictionary} asserts that this number is the sum of
$g_v$ over the nontrivial zeros; since every zero in the range used (and far
beyond $\gamma_{512}$) is known to lie on the critical line, the zero side is
$2\sum_{n\ge1}g_v(\gamma_n)$ over the ordinates.
Table~\ref{tab:zeroside} shows the partial sums against the first $512$
zeros: the raw residual falls to $-4.7\cdot10^{-11}$, and correcting the
remainder by the archimedean tail integral of Section~\ref{sec:tail} (the
smooth zero-density approximation to the missing zeros) leaves
$-5.4\cdot10^{-12}$.  The same package verifies the identity on a generic,
non-neutral vector at $(c,N)=(29,6)$, where the two sides of
Lemma~\ref{lem:entry-id} agree to $2.0\cdot10^{-10}$, and on its pole-neutral
counterpart, where the pole term vanishes to $9\cdot10^{-41}$ while the
prime and archimedean terms are of order one: the pole really is switched
off, not merely small.

\begin{table}[t]
\centering
\begin{tabular}{rccc}
\toprule
$M$ & $2\sum_{n\le M}g_v(\gamma_n)$ & raw residual & tail-corrected residual\\
\midrule
$32$  & $0.049967509\ldots$ & $-9.1\cdot10^{-7}$  & $-1.6\cdot10^{-7}$\\
$64$  & $0.049968326\ldots$ & $-8.8\cdot10^{-8}$  & $-9.4\cdot10^{-9}$\\
$128$ & $0.049968406\ldots$ & $-7.9\cdot10^{-9}$  & $-1.0\cdot10^{-9}$\\
$256$ & $0.049968413\ldots$ & $-6.5\cdot10^{-10}$ & $-8.3\cdot10^{-11}$\\
$512$ & $0.049968414\ldots$ & $-4.7\cdot10^{-11}$ & $-5.4\cdot10^{-12}$\\
\bottomrule
\end{tabular}
\caption{Zero-side partial sums for the worked example
($c=13$, $N=4$, pole-neutral $v$; $\gamma_n$ = ordinate of the $n$-th
nontrivial zero, $\gamma_{512}=826.90\ldots$).  The limit is the closed-form
matrix contraction $\langle v,\Qinf v\rangle$; residuals are signed.}
\label{tab:zeroside}
\end{table}

\begin{figure}[t]
\centering
\includegraphics[width=\textwidth]{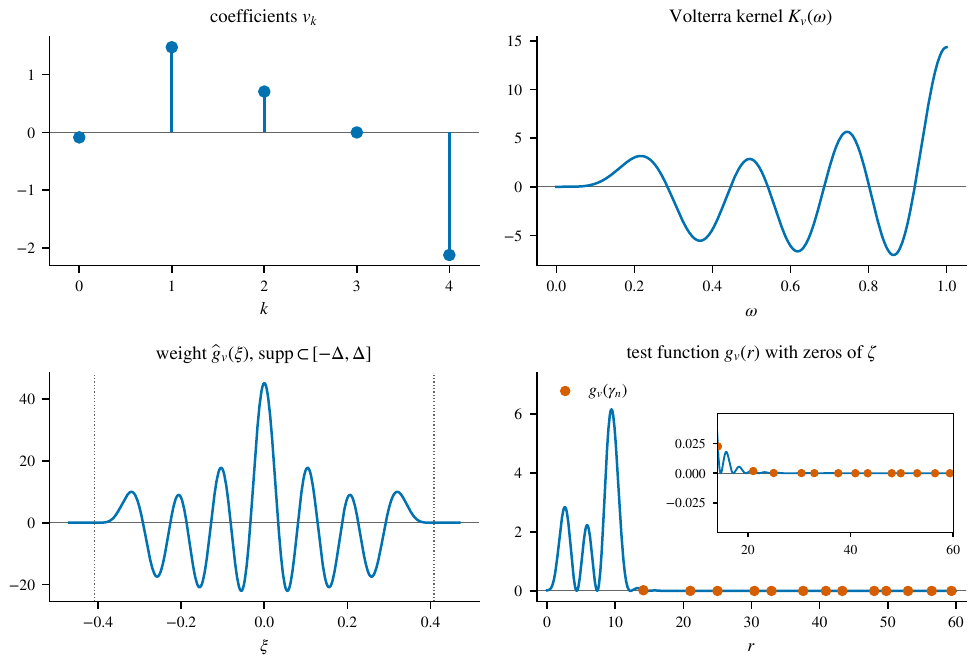}
\caption{The finite dictionary at $c=13$, $N=4$ for the worked example, read
in row order.  Top row: the coefficient vector $v$; the Volterra kernel $K_v$
on $[0,1]$.  Bottom row: the compactly supported Fourier weight $\wh g_v$ on
$[-\Delta,\Delta]$; the induced entire test function $g_v$ on $[0,60]$ with
the first ordinates $\gamma_n$ marked (inset: the small tail oscillation that
carries the zero sum).  By Theorem~\ref{thm:dictionary} the
quadratic value $\langle v,\Qinf v\rangle$ equals twice the sum of the marked
values (continued over all zeros).}
\label{fig:dictionary}
\end{figure}

\section{Archimedean tail order}\label{sec:tail}

Recall $\rho=2\pi/L$ and $I_N=\{-N,\ldots,N\}$.  For $T>\rho N$, set
$a_T=T/\rho$ and define
\[
        p_T(n)=\frac1{a_T-n},\qquad q_T(n)=\frac1{a_T+n}
        \quad(n\in I_N).
\]

\begin{lemma}[The archimedean density]\label{lem:hplus-positive}
The function $h_+$ is strictly increasing on $(0,\infty)$, with
\[
 h_+'(t)
 =
 \frac{t}{2}\sum_{k=0}^{\infty}
 \frac{k+1/4}{\bigl((k+1/4)^2+(t/2)^2\bigr)^2}
 \;\le\;
 \frac1t+\frac{13}{10\,t^2}
 \qquad(t>0).
\]
Interval evaluation in Arb \cite{Johansson2017Arb} gives
$h_+(7)=0.10717967\ldots>0$, so $h_+(t)>0$ for $t\ge7$; combined with the
derivative bound,
\[
        h_+(t)\;\le\;\log t-\tfrac{8}{5}
        \qquad(t\ge7).
\]
\end{lemma}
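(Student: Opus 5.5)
Write $a_k=k+\tfrac14$ and $b=t/2$. The plan starts from the Mittag-Leffler series of the digamma function,
\[
\psi_\Gamma(w)=-\gamma+\sum_{k\ge0}\Bigl(\frac1{k+1}-\frac1{k+w}\Bigr).
\]
At $w=\tfrac14+ib$ we have $\operatorname{Re}\frac1{k+w}=\frac{a_k}{a_k^2+b^2}$, so
\[
h_+(t)=-\gamma-\log\pi+\sum_k\Bigl(\frac1{k+1}-\frac{a_k}{a_k^2+b^2}\Bigr).
\]
Differentiating term by term in $t$ is justified, since the differentiated series converges uniformly on compacts. Using $db/dt=\tfrac12$, each term contributes $\frac{a_k b}{(a_k^2+b^2)^2}$, and with $b=t/2$ this gives exactly the displayed series for $h_+'(t)$. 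Every summand is positive for $t>0$, so $h_+$ is strictly increasing on $(0,\infty)$.

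For the upper bound, set $f(x)=\frac{x}{(x^2+b^2)^2}$, so that $h_+'(t)=b\sum_{k\ge0}f(k+\tfrac14)$. We have $\int_0^\infty f=\frac1{2b^2}$, hence $b\int_0^\infty f=\frac1{2b}=\frac1t$; this is the main term. The error term is the hard part, because it must give the explicit constant $13/10$ uniformly for all $t>0$ and not only asymptotically. The sum samples $f$ at the offset points $k+\tfrac14$. I would first try Euler--Maclaurin with the periodic Bernoulli function shifted by $\tfrac14$:
\[
\sum_{k\ge0}f(k+\tfrac14)=\int_0^\infty f+\Bigl(\tfrac12-\tfrac14\Bigr)f(0)\text{-type boundary terms}+\int B_1(\cdot)\,f' .
\]
Since $f(0)=0$, the boundary term vanishes. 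Integrating once more, the $B_2$ term gives $-\tfrac12 B_2(\tfrac14)f'(0)$ with $f'(0)=b^{-4}$ and $B_2(\tfrac14)=-\tfrac1{48}$. This contributes $\frac{1}{96}b^{-4}$, which after multiplying by $b$ is $O(t^{-3})$ and has the wrong shape for small $t$.

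So I would split into two regimes. For $t$ large, say $t\ge t_0$, I bound the remainder integral $\int |B_2|\,|f''|$ by a constant times the total variation of $f'$, which is $\lesssim b^{-4}$. After multiplying by $b$ this is $\lesssim t^{-3}\le \frac{13}{10}t^{-2}$ once $t\ge t_0$. For small $t$, I bound the sum crudely. The $k=0$ term is at most $b\,a_0/(a_0^2+b^2)^2$, and the tail is at most $b\int_{0}^\infty f = 1/t$, using monotonicity of $f$ beyond its maximum at $x=b/\sqrt3$ together with a comparison sum-below-integral. The $k=0$ term is $\le \frac{b a_0}{(a_0^2+b^2)^2}$, and by AM-GM this is $\le C/t^2$ with $C$ computable, where $\max_b b^3 a_0/(a_0^2+b^2)^2$ sits at $b^2=3a_0^2$ and equals $\frac{3\sqrt3}{16}a_0^2$. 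After rescaling, this yields $\frac{13}{10}$ with room to spare, though I expect the constant $13/10$ to need checking carefully at the crossover between the two regimes.

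For the final inequality, $h_+(7)>0$ (taken from the Arb evaluation) together with monotonicity gives positivity on $t\ge7$. Integrating the derivative bound from $7$ to $t$ gives
\[
h_+(t)\le h_+(7)+\log(t/7)+\tfrac{13}{10}\Bigl(\tfrac17-\tfrac1t\Bigr)\le \log t-\log7+0.1072+0.1858.
\]
Here $-\log 7+0.293\approx-1.653<-\tfrac85$, which completes the bound.
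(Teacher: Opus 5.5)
Your derivation of the series, the monotonicity, and the final integration from $7$ to $t$ are correct and agree with the paper. The gap is in the derivative bound, which is the only nontrivial step: there the proposal is incomplete, and the one constant you compute is wrong.

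The expression $b^3a_0/(a_0^2+b^2)^2$ is homogeneous of degree $0$ in $(a_0,b)$, so its maximum over $b$ is $3\sqrt3/16$, not $\tfrac{3\sqrt3}{16}a_0^2$. Hence the $k=0$ term is only bounded by $\tfrac{3\sqrt3}{4}t^{-2}\approx1.2990\,t^{-2}$. That uses up essentially the whole budget $\tfrac{13}{10}t^{-2}$, so there is no room to spare, and nothing else can be absorbed in the small-$t$ regime.

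Moreover, neither threshold is determined, so the crossover you flag is not established.
\begin{itemize}
\item The tail comparison $\sum_{k\ge1}f(k+\tfrac14)\le\int_{1/4}^\infty f$, done as you describe via monotonicity past the mode, needs $f$ nonincreasing on $[\tfrac14,\infty)$. That means $b/\sqrt3\le\tfrac14$, i.e.\ $t\le\sqrt3/2$.
\item Carrying out your Euler--Maclaurin step uses total variation of $f'$ equal to $\tfrac32b^{-4}$ and $|\tilde B_2|\le\tfrac16$. It gives an excess of at most $b\bigl(\tfrac1{96}+\tfrac18\bigr)b^{-4}=\tfrac{13}{12}t^{-3}$, which fits under $\tfrac{13}{10}t^{-2}$ only for $t\ge\tfrac56$.
\end{itemize}
So the two regimes do cover $(0,\infty)$, but only because the window $[\tfrac56,\tfrac{\sqrt3}{2}]\approx[0.833,0.866]$ happens to be nonempty. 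This has to be verified, not anticipated.

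The missing idea is that no splitting is needed. Stop your Euler--Maclaurin expansion at first order, with $\tilde B_1(y)=\{y\}-\tfrac12$:
\[
\sum_{k\ge0}f(k+\tfrac14)=\int_0^\infty f+\int_0^\infty\tilde B_1(x-\tfrac14)f'(x)\,dx .
\]
The boundary term vanishes since $f(0)=0$. Because $f$ rises from $0$ to its maximum and falls back to $0$, you have $\bigl|\int_0^\infty\tilde B_1(x-\tfrac14)f'(x)\,dx\bigr|\le\tfrac12\int_0^\infty|f'|=\max f$. Therefore, for every $t>0$,
\[
h_+'(t)\le\tfrac1t+b\max f=\tfrac1t+\tfrac{9}{16\sqrt3\,b^2}=\tfrac1t+\tfrac{3\sqrt3}{4t^2}.
\]
This is the paper's proof. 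The paper phrases it as the unimodal sum-versus-integral comparison and uses the marginally smaller $\int_{1/4}^\infty f$.

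By the same scale invariance, your corrected $k=0$ bound is exactly this $b\max f$ term. Your small-$t$ argument is therefore the paper's argument restricted to the range where $f$ is already nonincreasing on $[\tfrac14,\infty)$. Unimodality handles the general case, and the second-order term $\tfrac1{96}b^{-4}$ never enters.
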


\begin{proof}
Differentiating the partial-fraction expansion of
$\operatorname{Re}\psi_\Gamma(1/4+it/2)$ term by term gives the displayed
series, which is positive for $t>0$; monotonicity and, with the certified
value $h_+(7)>0$, positivity on $[7,\infty)$ follow.  For the upper bound,
put $u=t/2$ and $g_u(x)=(x+1/4)/\bigl((x+1/4)^2+u^2\bigr)^2$.  The function
$s\mapsto s/(s^2+u^2)^2$ increases up to $s=u/\sqrt3$ and decreases
thereafter, so $g_u$ is unimodal in $x\ge0$ and
\[
 \sum_{k\ge0}g_u(k)\;\le\;\int_0^\infty g_u(x)\dd x+\max_{s>0}\frac{s}{(s^2+u^2)^2}
 \;=\;\frac{1}{2(u^2+1/16)}+\frac{9}{16\sqrt3\,u^3}
 \;\le\;\frac1{2u^2}+\frac{9}{16\sqrt3\,u^3},
\]
whence $h_+'(t)=u\sum_k g_u(k)\le 1/(2u)+9/(16\sqrt3\,u^2)
=1/t+ (9/(4\sqrt3))/t^2\le 1/t+13/(10t^2)$.
Integrating from $7$ to $t$,
\[
 h_+(t)\le h_+(7)+\log\frac t7+\frac{13}{10}\Bigl(\frac17-\frac1t\Bigr)
 \le \log t+\Bigl(h_+(7)+\tfrac{13}{70}-\log7\Bigr)
 \le \log t-1.65,
\]
using the certified $h_+(7)<0.1072$.
\end{proof}

\begin{theorem}[Archimedean tail order]\label{thm:tail-order}
Let $T_2>T_1>\max(\rho N,7)$.  Then
\[
        \Delta_{T_1,T_2}:=Q_{{\rm arch},T_2}-Q_{{\rm arch},T_1}
\]
has the exact representation
\[
\Delta_{T_1,T_2}
 =
 \frac1{\pi^2}\int_{T_1}^{T_2}
 h_+(T)\frac{\sin^2(LT/2)}{\rho}
        \bigl(p_Tp_T^{\mathsf t}+q_Tq_T^{\mathsf t}\bigr)\dd T .
\]
Consequently $\Delta_{T_1,T_2}$ is positive definite on the full complex
finite frequency space indexed by $I_N$, and hence on the real even subspace.
Moreover, in the natural order $-N<\cdots<N$, every minor of
$\Delta_{T_1,T_2}$ with increasing row and column sets is strictly positive.
\end{theorem}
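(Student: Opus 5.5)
The plan is to compute the archimedean source at integer nodes in closed form, recognize its divided differences as rank-one Cauchy data, and read off positivity and total positivity from Cauchy determinants.

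\emph{Step 1: closed form for $\mathcal S$.} Write $\sin(2\pi x(1-y/L))=\sin(\rho x(L-y))$. The product-to-sum formula with $\cos(ry)$ gives two terms of the form $\int_0^L\sin(A-by)\dd y=(\cos(A-bL)-\cos A)/b$, with $A=2\pi x$ and $b=\rho x\mp r$. Since $A-bL=\pm rL$, this yields
\[
\mathcal S(r,x,L)=\tfrac12\bigl(\cos(rL)-\cos(2\pi x)\bigr)\Bigl(\frac1{\rho x-r}+\frac1{\rho x+r}\Bigr).
\]
At an integer node $x=n$ we have $\cos 2\pi n=1$ and $\sin 2\pi n=0$. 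So, with $a=r/\rho$,
\[
\mathcal S(r,n,L)=\frac{\sin^2(rL/2)}{\rho}\Bigl(\frac1{a-n}-\frac1{a+n}\Bigr),
\qquad
\partial_x\mathcal S(r,x,L)\big|_{x=n}=\frac{\sin^2(rL/2)}{\rho}\Bigl(\frac1{(a-n)^2}+\frac1{(a+n)^2}\Bigr).
\]
The term containing $\partial_x\cos(2\pi x)$ drops out at integers. This also gives the $O(r^{-2})$ bounds promised in Lemma~\ref{lem:entry-id}.

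\emph{Step 2: divided differences.} For $\varphi(x)=1/(a-x)$ one has $(\varphi(m)-\varphi(n))/(m-n)=p(m)p(n)$ with diagonal $\varphi'(n)=p(n)^2$. For $-1/(a+x)$ the divided difference is likewise $q(m)q(n)$. Hence the divided-difference matrix of $\mathcal S(r,\cdot,L)$ is $\rho^{-1}\sin^2(rL/2)\,(p_ap_a^{\mathsf t}+q_aq_a^{\mathsf t})$. This is legitimate for $a>N$, where no pole meets $I_N$.

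The integrand of \eqref{eq:arch-source} is even in $r$, since $h_+$ is even and $\mathcal S$ is even in $r$. Differentiation in $x$ commutes with the compact $r$-integral. Therefore
\[
\Delta_{T_1,T_2}=\frac{2}{2\pi^2}\int_{T_1}^{T_2}h_+(T)\cdots\dd T,
\]
which is the displayed representation with the variable renamed $r=T$ and $a=a_T$.

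\emph{Step 3: positive definiteness.} By Lemma~\ref{lem:hplus-positive}, $h_+>0$ on $[7,\infty)$, and $\sin^2(LT/2)>0$ off a discrete set. For complex $w\neq0$,
\[
w^*\Delta w=\pi^{-2}\int_{T_1}^{T_2}h_+\rho^{-1}\sin^2\bigl(|\langle p_T,w\rangle|^2+|\langle q_T,w\rangle|^2\bigr)\dd T.
\]
Here $\langle p_T,w\rangle=\sum_n \bar w_n/(a_T-n)$ is a nonzero rational function of $a_T$, because its partial fractions have distinct poles. It is therefore nonzero for all but finitely many $T$, so the integral is strictly positive.

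\emph{Step 4: strict total positivity (main obstacle).} The difficulty is that the integrand is a sum of two rank-one pieces. I would merge them into a single kernel. Write $1/(a+n)=-1/(b-n)$ with $b=-a<-N$. Then the integrand is $\varepsilon(b)^2\,k(m,b)k(n,b)$ with $k(n,b)=1/(b-n)$, integrated against a positive measure $d\nu$ on $(-\infty,-N)\cup(N,\infty)$; the weight is $h_+\sin^2/\rho$ transported to both branches. The continuous Cauchy--Binet formula gives, for increasing row set $m_1<\dots<m_k$ and column set $n_1<\dots<n_k$,
\[
\det\Delta[\mathbf m,\mathbf n]=\frac1{k!}\int\det[k(m_i,b_j)]\det[k(n_i,b_j)]\,d\nu^{\otimes k}(b).
\]
Both determinants are Cauchy determinants:
\[
\prod_{i<l}(m_l-m_i)\prod_{j<l}(b_j-b_l)\Big/\prod_{i,j}(b_j-m_i),
\]
up to a universal sign convention shared by both factors. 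In the product, the $b$-Vandermonde appears squared. Each denominator factor $(b_j-m_i)(b_j-n_{i'})$ is taken collectively over $k$ entries per $b_j$, so it has the same sign $(\pm1)^{k}$ for both factors whether $b_j>N$ or $b_j<-N$; hence the denominators multiply to a positive quantity. The Vandermondes in the increasing $m$'s and $n$'s are both positive. So the integrand is $\ge0$, and it is strictly positive whenever the $b_j$ are distinct and avoid the zeros of $\sin$, which is a set of positive $\nu^{\otimes k}$-measure. I expect the careful sign bookkeeping here, namely the shared convention for the Cauchy determinant and the mixed $b$-branches, to be the step needing the most care.
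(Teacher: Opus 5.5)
Your proposal is correct and follows the paper's proof essentially step for step: the closed form of $\mathcal S$ and of its $x$-derivative at integer nodes, the rank-two Cauchy density obtained from divided differences, positive definiteness from the poles of $\sum_n w_n/(a-n)$, and strict total positivity via Andr\'eief's identity with Cauchy determinants for the measure placed on both branches $s=\pm a$. The differences are cosmetic. You derive $\mathcal S(r,x,L)$ for all real $x$ and differentiate that closed form, where the paper differentiates under the integral. You also use the squared $b$-Vandermonde in place of the paper's ordering of the support points and sign count $(-1)^{k(k-1)/2+k\ell_-}$.
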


\begin{proof}
Since the finite-$T$ archimedean integrand is even in $r$,
\[
        \frac{d}{dT}\psi_{{\rm arch},T}(x)
        =\frac{1}{\pi^2}h_+(T)\,\mathcal S(T,x,L).
\]
At an integer node $n\in I_N$, direct integration gives
\[
 \mathcal S(T,n,L)
 =
 \frac{2\rho n\sin^2(LT/2)}{T^2-\rho^2n^2}
 =
 \frac{2\sin^2(LT/2)}{\rho}\,\frac{n}{a_T^2-n^2},
\]
and for the diagonal entries, differentiating under the integral sign and
integrating by parts,
\[
 \partial_x\mathcal S(T,x,L)\Big|_{x=n}
 =\int_0^L 2\pi\Bigl(1-\frac yL\Bigr)
   \cos\Bigl(2\pi n\Bigl(1-\frac yL\Bigr)\Bigr)\cos(Ty)\dd y
 =\frac{2\sin^2(LT/2)}{\rho}\,\frac{a_T^2+n^2}{(a_T^2-n^2)^2},
\]
which is exactly the derivative at $x=n$ of the closed form
$x\mapsto(2\sin^2(LT/2)/\rho)\,x/(a_T^2-x^2)$.  This matters because the
diagonal entries of a divided-difference matrix are derivatives of the true
source, not of an integer-node surrogate; here the two agree.  The divided
difference of $x/(a^2-x^2)$ is
\[
 \frac12\left(
 \frac1{(a-m)(a-n)}+\frac1{(a+m)(a+n)}
 \right),
\]
with the same formula in the diagonal limit, so
\[
 \frac{d}{dT}(Q_{{\rm arch},T})_{mn}
 =
 h_+(T)\frac{\sin^2(LT/2)}{\pi^2\rho}
 \left(
 \frac1{(a_T-m)(a_T-n)}
 +\frac1{(a_T+m)(a_T+n)}
 \right),
\]
which is the displayed rank-two density.

For positive definiteness, suppose $x\in\C^{I_N}$ and
$x^*\Delta_{T_1,T_2}x=0$.  The integrand is a sum of two absolute squares
with the strictly positive scalar factor $h_+(T)\sin^2(LT/2)$ off a discrete
set, so by continuity
\[
        \sum_{n\in I_N}\frac{x_n}{a-n}=0
\]
for all $a=T/\rho$ in $(T_1/\rho,T_2/\rho)$.  This rational function has
simple poles at the nodes $n\in I_N$ unless it vanishes identically; taking
residues gives every $x_n=0$.

It remains to prove the minor statement.  Put $A=T_1/\rho$ and $B=T_2/\rho$.
After the change of variables $T=\rho a$, the same representation is
\[
 \Delta_{T_1,T_2}(m,n)
 =
 \int_{[-B,-A]\cup[A,B]}\frac{1}{(s-m)(s-n)}\dd\nu(s),
\]
where $\nu$ is the push-forward of
$\pi^{-2}h_+(\rho a)\sin^2(\pi a)\dd a$
to both $s=a$ and $s=-a$: a nonnegative measure with positive mass on every
nonempty open subinterval.  For increasing row nodes $x_1<\cdots<x_k$ and
column nodes $y_1<\cdots<y_k$ in $I_N$, Andr\'eief's identity
\cite{Andreief1886,Forrester2019} gives
\[
 \det[\Delta_{T_1,T_2}(x_i,y_j)]
 =
 \frac1{k!}\int
 \det\!\left[\frac1{s_\ell-x_i}\right]_{i,\ell=1}^k
 \det\!\left[\frac1{s_\ell-y_j}\right]_{j,\ell=1}^k
 \prod_{\ell=1}^k\dd\nu(s_\ell).
\]
On ordered distinct support points $s_1<\cdots<s_k$, the Cauchy determinant
formula \cite[Ch.~1]{Karlin1968} gives
\[
 \det\!\left[\frac1{s_\ell-x_i}\right]_{i,\ell=1}^k
 =
 \frac{\prod_{i<j}(x_i-x_j)\prod_{\ell<r}(s_r-s_\ell)}
      {\prod_{i,\ell}(s_\ell-x_i)} .
\]
All nodes $x_i$ lie strictly between the two support intervals, so if
$\ell_-$ of the $s_\ell$ lie in $[-B,-A]$ the sign of this determinant is
$(-1)^{k(k-1)/2+k\ell_-}$, depending only on $k$ and $\ell_-$.  The same
holds for the $y_j$ determinant with the same $s_\ell$, hence the same
$\ell_-$, so the two signs cancel and the integrand is nonnegative, and
positive on a set of positive $\nu^{\otimes k}$ measure.  The integral is
therefore strictly positive.  This is the classical Cauchy total-positivity
mechanism \cite{Karlin1968,Simon2014CauchyTP,BertolaGekhtmanSzmigielski2009},
applied to the finite archimedean tail representation above.
\end{proof}

\begin{corollary}[A two-sided finite certification rule]\label{cor:tail-cert}
On the full finite frequency space $\C^{I_N}$, let
\[
        Q_T^{\rm tot}=Q_{\rm prime}^{(c)}+Q_{\rm pole}+Q_{{\rm arch},T},
\qquad
 B_T=
 \frac1{\pi^2}\int_T^\infty
 h_+(r)\frac{\sin^2(Lr/2)}{\rho}
        \bigl(\|p_r\|_2^2+\|q_r\|_2^2\bigr)\dd r .
\]
For $T>\max(\rho N,7)$:
\begin{enumerate}
\item[(i)] $\Qinf-Q_T^{\rm tot}\succ0$ and
$0\preceq \Qinf-Q_T^{\rm tot}\preceq B_TI$, hence
\[
        \lambda_j(Q_T^{\rm tot})<\lambda_j(\Qinf)
        \le\lambda_j(Q_T^{\rm tot})+B_T
        \qquad(1\le j\le 2N+1),
\]
and the eigenvalues $\lambda_j(Q_T^{\rm tot})$ increase strictly to $\lambda_j(\Qinf)$ as $T\uparrow\infty$.
\item[(ii)] \emph{Decision rule.}
$\lambda_j(Q_T^{\rm tot})\ge0$ certifies $\lambda_j(\Qinf)>0$;
$\lambda_j(Q_T^{\rm tot})<-B_T$ certifies $\lambda_j(\Qinf)<0$;
a negative eigenvalue in the band $[-B_T,0)$ certifies nothing about the
cutoff-free sign.
\item[(iii)] \emph{Explicit budget.}  For $N\ge1$,
\[
 B_T\;\le\;\frac{2(2N{+}1)\rho}{\pi^2}
 \left[
 \frac{\log T}{T-\rho N}
 +\frac{1}{\rho N}\log\frac{T}{T-\rho N}
 \right],
\]
and, as $T\to\infty$ with $(c,N)$ fixed,
\[
 B_T=\frac{(2N{+}1)\rho}{\pi^2\,T}\Bigl(\log\frac{T}{2\pi}+1\Bigr)(1+o(1)).
\]
In particular $B_{2T}/B_T\to\tfrac12$: each doubling of the archimedean
cutoff lowers the certification floor by one binary digit.
\end{enumerate}
The corresponding restricted statements hold on the real even subspace.
\end{corollary}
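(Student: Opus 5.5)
Part (i) comes from passing to the limit $T_2\to\infty$ in Theorem~\ref{thm:tail-order}. Fix $T>\max(\rho N,7)$. Since $Q_T^{\rm tot}$ and $\Qinf$ differ only in the archimedean block, we have $\Qinf-Q_T^{\rm tot}=\lim_{T_2\to\infty}\Delta_{T,T_2}$ entrywise, and Lemma~\ref{lem:entry-id} guarantees this limit exists. The exact integral representation of Theorem~\ref{thm:tail-order} therefore extends to $[T,\infty)$:
\[
\Qinf-Q_T^{\rm tot}=\frac1{\pi^2}\int_T^\infty h_+(r)\frac{\sin^2(Lr/2)}{\rho}\bigl(p_rp_r^{\mathsf t}+q_rq_r^{\mathsf t}\bigr)\dd r .
\]
The integral converges absolutely: $h_+(r)\le\log r$ by Lemma~\ref{lem:hplus-positive}, and $\|p_r\|^2,\|q_r\|^2=O(r^{-2})$.

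From this representation I get the following.
\begin{itemize}
\item Strict positivity follows because $\Qinf-Q_T^{\rm tot}\succeq\Delta_{T,T+1}\succ0$.
\item The upper bound follows because each rank-one piece satisfies $pp^{\mathsf t}\preceq\|p\|^2I$, so integrating gives $\preceq B_TI$.
\item The eigenvalue inequalities then follow from Weyl monotonicity.
\item Strict increase in $T$ follows from Theorem~\ref{thm:tail-order} applied to $T_1<T_2$, again via Weyl's inequality for a positive definite increment.
\item Convergence to $\lambda_j(\Qinf)$ follows from $B_T\to0$, which part (iii) shows.
\end{itemize}
Part (ii) is a direct rereading of the two-sided inequality. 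For the ``certifies nothing'' clause I would only remark that both signs of $\lambda_j(\Qinf)$ are compatible with $\lambda_j(Q_T^{\rm tot})\in[-B_T,0)$ given only this information. The restriction to the real even subspace is the same argument applied to compressions, using Cauchy interlacing in place of Weyl.

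For part (iii), I would bound $\sin^2\le1$ and $h_+(r)\le\log r$. I would also bound $\|p_r\|^2+\|q_r\|^2=\sum_{n\in I_N}\bigl((a-n)^{-2}+(a+n)^{-2}\bigr)\le 2(2N+1)(a-N)^{-2}$, where $a=r/\rho$. This gives
\[
B_T\le\frac{2(2N+1)\rho}{\pi^2}\int_T^\infty\frac{\log r}{(r-\rho N)^2}\dd r .
\]
I would evaluate this integral exactly by parts, using $\int (r-b)^{-2}\log r\,dr=-\log r/(r-b)+\int \frac{dr}{r(r-b)}$ and $\int_T^\infty \frac{dr}{r(r-b)}=\frac1b\log\frac{T}{T-b}$ with $b=\rho N$. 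That reproduces the displayed bound exactly, and $N\ge1$ is needed so that $b>0$.

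The asymptotic in part (iii) is the main obstacle, because the constant $\log(T/2\pi)+1$ requires the true average rather than crude bounds. I would proceed as follows.
\begin{itemize}
\item Replace $\|p_r\|^2+\|q_r\|^2$ by $2(2N+1)\rho^2/r^2\,(1+O(1/r))$.
\item Replace $\sin^2(Lr/2)$ by its mean $1/2$. The oscillatory part $\int_T^\infty \cos(Lr)\,h_+(r)r^{-2}\dd r$ is $O(\log T/T^2)$ after one integration by parts, since $h_+'=O(1/r)$.
\item Use the Stirling asymptotic $h_+(r)=\log(r/2)-\log\pi+O(r^{-2})=\log(r/2\pi)+O(r^{-2})$.
\end{itemize}
This leaves
\[
\frac{(2N+1)\rho}{\pi^2}\int_T^\infty\frac{\log(r/2\pi)}{r^2}\dd r=\frac{(2N+1)\rho}{\pi^2T}\Bigl(\log\frac{T}{2\pi}+1\Bigr).
\]
I would then check that every error term is $o$ of this main term. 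The ratio $B_{2T}/B_T\to1/2$ follows immediately from the asymptotic.
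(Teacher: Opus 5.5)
Your proposal is correct and follows the paper's proof essentially step for step. You integrate the rank-two density of Theorem~\ref{thm:tail-order} over $[T,\infty)$, get strict positivity from a compact window and the bound $\preceq B_TI$ from the rank-one pieces (the paper phrases this as a trace bound, which is the same estimate), apply Weyl, and obtain (iii) from $\sin^2\le1$, $h_+(r)\le\log r$, the node bound on $\|p_r\|_2^2+\|q_r\|_2^2$, and the mean-$\tfrac12$ replacement with one integration by parts. One small slip: for the real even subspace you do not want Cauchy interlacing ``in place of'' Weyl, since interlacing compares a compression with the full matrix rather than two compressions with each other. Weyl applied directly to the compressed matrices suffices, because compressing by the isometric even embedding preserves $0\prec\Qinf-Q_T^{\rm tot}\preceq B_TI$.
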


\begin{proof}
(i) Integrate Theorem~\ref{thm:tail-order} from $T$ to infinity.  The
improper integral converges entrywise because
$p_r(n),q_r(n)=O(r^{-1})$ uniformly for $n\in I_N$ while $h_+(r)=O(\log r)$;
strict positivity follows by restricting to any compact window, and the trace
of the positive tail is $B_T$, so the tail is bounded above by $B_TI$.  The
eigenvalue inequalities and monotonicity follow from Weyl monotonicity.
(ii) is immediate from (i).
(iii) For the explicit bound, use $\sin^2\le1$, the envelope
$h_+(r)\le\log r$ of Lemma~\ref{lem:hplus-positive}, and
$\|p_r\|_2^2+\|q_r\|_2^2\le 2(2N{+}1)\rho^2/(r-\rho N)^2$, then integrate
\[
 \int_T^\infty\frac{\log r}{(r-b)^2}\dd r
 =\frac{\log T}{T-b}+\frac1b\log\frac{T}{T-b}
 \qquad(b=\rho N<T).
\]
For the asymptotic form, $\|p_r\|_2^2+\|q_r\|_2^2
=2(2N{+}1)\rho^2r^{-2}(1+O(N\rho/r))$,
$h_+(r)=\log(r/2\pi)+o(1)$ \cite{DLMFDigammaAsymptotic}, and the oscillatory
part of $\sin^2=\tfrac12-\tfrac12\cos(Lr)$ contributes $O(T^{-2}\log T)$
after one integration by parts; the remaining smooth integral is
$\tfrac12\int_T^\infty\log(r/2\pi)\,2(2N{+}1)\rho\,r^{-2}\dd r
=(2N{+}1)\rho\,(\log(T/2\pi)+1)/T$ up to the stated relative error.
\end{proof}

\paragraph{The $T=800$ correction scale.}
For $c=100$, $N=200$, $T=800$, the released package gives
$\rho N=272.875270768\ldots<800$, a certified dyadic interval budget
$B_{800}<0.897$ (the interval certificate bounds $\sin^2(Lr/2)\le1$ and
$h_+(r)\le\log r$, roughly doubling the exact value $0.4203$, against which
the asymptotic form of (iii) predicts $0.4051$; the closed-form bound of
(iii) gives the weaker $1.58$), and entrywise tail bound $3.18\cdot10^{-3}$.
A separate cutoff-free Arb interval $LDL^{\mathsf T}$ certificate at the same
$(c,N)$, at $9000$ bits, gives $n_+=401$ and $n_-=0$.  Thus the finite-$T$
negative eigenvalues that motivated this paper sat deep inside the
inconclusive band of the decision rule and were tail artifacts, while the
cutoff-free matrix is certified positive.  The asymptotic of (iii) makes the
obstruction quantitative: driving $B_T$ below a spectral scale of $10^{-59}$
at $(c,N)=(100,200)$ would require $T\approx8\cdot10^{62}$.  Deep spectral
scales are not reachable through the archimedean cutoff; they are
reachable through the cutoff-free closed forms.

\begin{figure}[t]
\centering
\includegraphics[width=\textwidth]{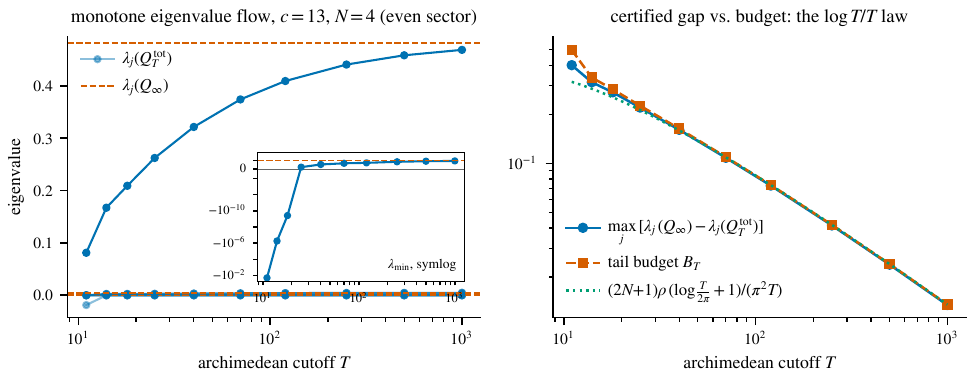}
\caption{The tail order in action at $c=13$, $N=4$.
Left: the eigenvalues of $Q_T^{\rm tot}$ increase strictly
to those of $\Qinf$ (dashed lines) as the cutoff $T$ grows; by
Corollary~\ref{cor:tail-cert} every gap is at most $B_T$.  Inset
($\lambda_{\min}$, symmetric log scale): at small $T$ the finite-cutoff
matrix has genuinely negative eigenvalues ($-1.9\cdot10^{-2}$ at $T=11$,
$-5.3\cdot10^{-7}$ at $T=14$, $-3.9\cdot10^{-10}$ at $T=18$), each far
inside its inconclusive band $(-B_T,0)$; the cutoff-free limit is
$+9.7\cdot10^{-15}$.  Right: the largest gap
$\max_j[\lambda_j(\Qinf)-\lambda_j(Q_T^{\rm tot})]$ against the budget $B_T$
and its asymptotic form $(2N{+}1)\rho(\log(T/2\pi)+1)/(\pi^2T)$; the
$\log T/T$ decay is the one-binary-digit-per-doubling law.}
\label{fig:tailorder}
\end{figure}

\section{Verification and scope}\label{sec:verification}

The released package records exact integer or symbolic audits for the
single-frequency identity, the $2N{+}1$ source quotient through $N=30$, and the
pole-neutral support corollary, including the diagonal derivative identity of
Corollary~\ref{cor:pole-neutral}.  The pole-neutral audit checks $3311$
pole-square entries, $18$ moment-independence determinant cases, and $45$
dimension instances of the formula $\dim=N-s-1$.  The archimedean audits
include an Arb interval certification of $h_+(7)>0$, a three-route derivative
bridge for the rank-two density (symbolic residual $0$; direct-kernel error
$9.5\cdot10^{-125}$), strict-total-positivity smoke tests with zero bad
minors for $N=2,3$ ($251$ and $3431$ minors), the $c=100,N=200,T=800$
interval tail budget quoted above, and the cutoff-free Arb
$LDL^{\mathsf T}$ certificate with $n_-=0$ at $9000$ bits, whose generating
script ships with the package.  The dictionary itself is confirmed by three
independent routes: the closed-form \ccm\ assembly, the source-side
evaluation of Theorem~\ref{thm:dictionary}, and the zero-side sum of
Table~\ref{tab:zeroside}; on a generic vector at $(c,N)=(29,6)$ the first two
routes agree to $2.0\cdot10^{-10}$, and all displayed constants are generated
by scripts included in the package.  These checks guard constants and signs;
they are not proof substitutes.

The limitations are explicit.  The dictionary is one-way, from finite
Galerkin vectors into the Guinand--Weil class; no inverse map from arbitrary
test functions is claimed.  Strict observability and strict total positivity
concern the isolated post-band archimedean increment, not the full Weil
block or the prime block.  The paper does not prove
RH, Weil positivity, a prime-location bound, a next-prime theorem, or a
factoring result.  The finite source quotient is the coordinate handoff for a
follow-up finite event-calculus study; no event-recovery theorem is used
here.

\paragraph{Use of AI tools.}
Language-model tools were used as a drafting and verification-tooling
assistant under the author's direction; the author takes sole responsibility
for all mathematics, computations, and text.

\paragraph{Data and code availability.}
The verification package (scripts, JSON artifacts, checksums, and the figure
generators) accompanies this paper as ancillary files and is maintained at
\url{https://github.com/akivag613/connes-cvs-} under
\texttt{papers/2\_guinand\_weil\_dictionary\_tail\_order/}; a checksummed archive is
deposited on
Zenodo (concept DOI \href{https://doi.org/10.5281/zenodo.21124802}{10.5281/zenodo.21124802}).

{\small\raggedright
\bibliographystyle{plainurl}
\bibliography{main}
}

\end{document}